\newcommand{\vertex}{\node[vertex]}
\tikzstyle{vertex}=[circle, draw, inner sep=0pt, minimum size=6pt]
\newtheorem{theorem}{Theorem}
\newtheorem{lemma}{Lemma}
\newtheorem{corollary}{Corollary}
\newtheorem{problem}{Problem}
\newtheorem{proposition}{Proposition}
\newtheorem{conjecture}{Conjecture}
\newtheorem{observation}{Observation}
\begin{document}
\title{The number of independent sets in bipartite graphs and benzenoids}
\author{
Michael Han$^{d}$
\and 
Sycamore Herlihy$^{e}$
\and
Kirsti Kuenzel$^{b}$
\and 
Daniel Martin$^{a}$
\and
Rachel Schmidt$^{c}$}

\date{\today}

\maketitle

\begin{center}
$^a$ Department of Mathematics, Amherst College, Amherst, MA \\
$^b$ Department of Mathematics, Trinity College, Hartford, CT\\
$^c$ Department of Mathematics and Statistics, Williams College, Williamstown, MA\\
$^d$ Department of Mathematics, Yale University, New Haven, CT\\
$^e$ Department of Mathematical Sciences, Worcester Polytechnic Institute, Worcester, MA\\

\end{center}
\vskip15mm
\begin{abstract} 
Given a graph $G$, we study the number of independent sets in $G$, denoted $i(G)$. This parameter is known as both the Merrifield-Simmons index of a graph as well as the Fibonacci number of a graph. In this paper, we give general bounds for $i(G)$ when $G$ is bipartite and we give its exact value when $G$ is a balanced caterpillar. We improve upon a known upper bound for $i(T)$ when $T$ is a tree, and study a conjecture that all but finitely many positive integers represent $i(T)$ for some tree $T$. We also give exact values for $i(G)$ when $G$ is a particular type of benzenoid. 
\end{abstract}

{\small \textbf{Keywords:} independent sets, Fibonacci, Merrifield-Simmons index, benzenoid, trees} \\
\indent {\small \textbf{AMS subject classification:} 05C05, 05C69 }
%%%%%%%%%%%%%%%%%%%%%%%%%%%%%%%%%%%
%%%%%%%%%%%%%%%%%%%%%%%%%%%%%%%%%%%
\section{Introduction} \label{sec:intro}
The study of independent sets in graphs is popular throughout the literature. From studying the independence number of a graph (the maximum cardinality of an independent set) to graphs where every maximal independent set is maximum (well-covered graphs) to the chromatic number of a graph (partitioning the vertices into the fewest number of independent sets). Motivated by two separate and completely different applications, a parameter describing  the number of independent sets in a graph was defined. We first see the topological index known as the Merrifield-Simmons index $\sigma(G)$, or the number of independent sets in $G$,  defined in \cite{MS-1989}. The motivation for this index is that there is a correlation between boiling points in certain molecules and the number of independent sets in the graphical representation of the molecule. Independently, the Fibonacci number of a graph $G$ $i(G)$, also the number of independent sets in $G$, is defined in \cite{PT-1982}. In this context, the name refers to the fact that the usual Fibonacci numbers describe the number of independent sets in paths. Throughout this paper, we will use the notation $i(G)$. We also note that  there is a series of papers considering the number of independent sets of a certain size and given minimum degree (see \cite{A-1991, EG-2012, K-2001, Z-2010}).

In this paper, we study the number of independent sets in two different graph classes. First, we study the number of independent sets in bipartite graphs, in large part motivated by a problem which first appeared in \cite{L-1989}. A positive integer $n$ is considered \emph{constructible on trees} if there exists a tree $T$ such that $i(T)=n$. In \cite{PT-1982}, it was shown that given a tree $T$ of order $n$, $F_{n+1} \le i(T) \le 2^{n-1}+1$ where $F_{n+1}$ represents the $(n+1)^{st}$ Fibonacci number. It is known that certain integers are not constructible and the question arose as to whether there are infinitely many integers that are not constructible. Later Wagner \cite{W-2009} showed that almost all trees have an even number of independent sets and posed a conjecture regarding $i(T)$ modulo $m$. Law proved Wagner's conjecture in \cite{Law-2010}, but the original question remains as to whether an infinite number of integers are not constructible. We first study $i(G)$ where $G$ is a bipartite graph and then focus on trees. In particular, we provide an algorithm that provides evidence that there are in fact an infinite number of integers which are not constructible on trees. 

Next, we study $i(G)$ when $G$ is a benzenoid. Recall that a benzenoid graph is a $2$-connected graph in which all inner faces are hexagons (and all hexagons are faces). We study benzenoids which can be embedded in the hexagonal lattice (although there are benzenoid systems which need not be embeddable in the hexagonal lattice).  Some benzenoids are carcinogenic and the ability to break down such molecules is of particular importance in the sustainability of the planet.  Therefore, the molecular applications of the Merrifield-Simmons index are important in this graph class. However, the hexagonal lattice is bipartite and therefore this second graph class is not completely unrelated to the first. We give exact values for $i(G)$ in certain benzenoids. Moreover, our algorithms can be generalized to find bounds on any benzenoid graph. 

The remainder of this paper is organized as follows. In Section~\ref{sec:def}, we give useful definitions and known results used throughout the paper. We study $i(G)$ when $G$ is bipartite in Section~\ref{sec:bipartite}. In Section~\ref{sec:trees}, we improve upon a known upper bound for $i(T)$ when $T$ is a tree and we consider the constructibility question on trees. Finally, we give exact values for $i(G)$ when $G$ is a particular type of benzenoid in Section~\ref{sec:benz}. 

\subsection{Definitions and Preliminaries}\label{sec:def}
Let $G$ be a simple undirected graph with vertex set $V(G)$ and edge set $E(G)$. Throughout this paper, we let $n(G) = |V(G)|$. Given two vertices $u, v \in V(G)$, we let $d(u,v)$ represent the distance between $u$ and $v$ in $G$. The \emph{girth} of $G$ is the length of the shortest cycle and is denoted $g(G)$. A set $M \subseteq E(G)$ is a \emph{matching} if no pair of edges in $M$ share a common vertex. The maximum cardinality among all matchings in $G$ is the \emph{matching number} of $G$ and is denoted $\alpha'(G)$. A set $S\subseteq V(G)$ is said to \textit{independent} if no pair of vertices in $S$ are adjacent in $G$. Typically, one studies the maximum size of an independent set in $G$, i.e. the independence number of $G$, denoted $\alpha(G)$. However, in this paper, we study the number of independent sets in $G$, or $i(G)$. Recall the Fibonacci sequence defined as $F_0=0, F_1=1, F_2=1$ and $F_{n+1} = F_n + F_{n-1}$ for $n \ge 2$. The following result is well-known (and the reason for this parameter to be called the Fibonacci number of a graph.) In the following, $L_n$ represents the $n^{th}$ Lucas number. 

\begin{theorem}\cite{PT-1982}\label{thm:P_nC_n} For all $n \in \mathbb{N}$, $i(P_n) = F_{n+2}$ and $i(C_n) = L_n = F_{n-1} + F_{n+1}$.
\end{theorem}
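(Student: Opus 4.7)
The plan is to prove both identities by a standard deletion-contraction style recursion on a chosen vertex, reducing to smaller instances whose values are given by the Fibonacci recurrence.

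For $P_n$ with vertex sequence $v_1,\dots,v_n$, I would fix attention on the endpoint $v_n$ and split the independent sets of $P_n$ into two classes: those not containing $v_n$, and those containing $v_n$. In the first case, the remaining vertices form an independent set in $P_{n-1}$, contributing $i(P_{n-1})$ sets. In the second case, $v_{n-1}$ is forbidden, so the remaining vertices form an independent set of $P_{n-2}$, contributing $i(P_{n-2})$ sets. This gives the recurrence $i(P_n)=i(P_{n-1})+i(P_{n-2})$. Checking the base cases $i(P_1)=2=F_3$ and $i(P_2)=3=F_4$ (and $i(P_0)=1=F_2$ if one allows the empty path), a routine induction yields $i(P_n)=F_{n+2}$.

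For $C_n$ (for $n\ge 3$), I would pick a vertex $v_1$ and similarly split the independent sets according to whether $v_1\in S$. If $v_1\notin S$, deleting $v_1$ leaves the path on $v_2,\dots,v_n$, which is $P_{n-1}$, contributing $i(P_{n-1})=F_{n+1}$. If $v_1\in S$, then both neighbors $v_2$ and $v_n$ are excluded from $S$, and the remaining vertices $v_3,\dots,v_{n-1}$ induce a path $P_{n-3}$, contributing $i(P_{n-3})=F_{n-1}$. Adding the two cases yields
\[
i(C_n)=F_{n-1}+F_{n+1},
\]
which is the usual definition of the Lucas number $L_n$.

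There is essentially no hard step: the only mild subtlety is bookkeeping at small values of $n$ (for instance, ensuring that the cycle formulation makes sense for $n=3,4$, where the ``residual path'' has $0$ or $1$ vertex, and verifying directly that the count still matches $L_n$). Once the two-case decomposition is set up cleanly and the base cases checked, both identities follow immediately, and the equality $L_n=F_{n-1}+F_{n+1}$ is a well-known Fibonacci-Lucas identity that can either be cited or proved by a one-line induction using $F_{n+1}=F_n+F_{n-1}$.
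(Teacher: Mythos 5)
Your proof is correct and is the standard argument; the paper does not prove this statement itself but cites it from Prodinger and Tichy, and your two-case decomposition (on whether a fixed endpoint, respectively a fixed cycle vertex, lies in the independent set) is exactly the classical derivation of the recurrences $i(P_n)=i(P_{n-1})+i(P_{n-2})$ and $i(C_n)=i(P_{n-1})+i(P_{n-3})$. The base cases you check are the right ones, so nothing is missing.
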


Bounds on $i(T)$ in terms of the order of $T$ were first given in shown in \cite{PT-1982}.

\begin{theorem}\cite{PT-1982}\label{thm:trees} For any tree $T$ on $n$ vertices, $F_{n+2} \le i(T) \le 2^{n-1}+1$. Moreover, $i(T) = F_{n+2}$ if and only if $T\cong P_n$ and $i(T) = 2^{n-1}+1$ if and only if $T\cong K_{1, n-1}$. 
\end{theorem}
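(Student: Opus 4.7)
The plan is to proceed by induction on $n$, exploiting the fundamental identity $i(G) = i(G - v) + i(G - N[v])$ valid for any vertex $v$ of any graph $G$ (split the independent sets of $G$ by whether they contain $v$). The base cases $n \in \{1, 2\}$ are immediate since in those cases $T \cong P_n \cong K_{1, n-1}$ and one checks directly that $i(T) = n + 1 = F_{n+2} = 2^{n-1} + 1$, so both bounds are met with equality by the (unique) tree of that order.

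For the upper bound, fix a leaf $v$ of $T$ with neighbor $u$, so that $N[v] = \{u, v\}$ and the identity becomes $i(T) = i(T - v) + i(T - \{u, v\})$. The tree $T - v$ has $n - 1$ vertices, so by induction $i(T - v) \le 2^{n-2} + 1$, while the trivial bound $i(H) \le 2^{|V(H)|}$ applied to $H = T - \{u, v\}$ gives $i(T - \{u, v\}) \le 2^{n - 2}$. Adding yields $i(T) \le 2^{n-1} + 1$. For equality both sub-bounds must be tight: the first forces $T - v \cong K_{1, n-2}$ by induction, and the second forces $T - \{u, v\}$ to be edgeless. One then checks that these two conditions force $u$ to be the center of the star (otherwise, if $u$ were one of its leaves, $T - \{u, v\}$ would equal $K_{1, n-3}$, which has edges whenever $n \ge 4$; the $n = 3$ case is easy), so $T \cong K_{1, n-1}$.

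For the lower bound, the recursive deletion naturally produces a forest rather than a tree, so one strengthens the statement to: for every forest $F$ on $n$ vertices, $i(F) \ge F_{n+2}$, with equality iff $F \cong P_n$. If $F$ is edgeless then $i(F) = 2^n$, which exceeds $F_{n+2}$ for $n \ge 3$, and the cases $n \in \{1,2\}$ are easy. Otherwise, pick a leaf $v$ of a non-trivial component with neighbor $u$; then $F - v$ and $F - \{u, v\}$ are forests of orders $n - 1$ and $n - 2$, and the inductive hypothesis gives $i(F) \ge F_{n+1} + F_n = F_{n+2}$. Equality in the sum forces equality in each term, so $F - v \cong P_{n-1}$ and $F - \{u, v\} \cong P_{n-2}$; the first says $F$ is $P_{n-1}$ with a pendant $v$ attached at some $u \in V(P_{n-1})$, and the second additionally forces $u$ to be an endpoint of that path (otherwise $P_{n-1} - u$ splits into two shorter paths and is not connected), whence $F \cong P_n$.

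The principal obstacle is recognizing that the tree-only induction is not closed, since $T - \{u, v\}$ is generally a forest when $u$ has degree at least two; this is handled by strengthening the lower bound to forests from the outset. The other delicate point is the equality analysis, specifically tracking how the star and path structures of the two deleted objects cohere to pin down $T$ up to isomorphism, and in the forest version ensuring that the extremal forest has no isolated vertices (which is automatic once one rules out the edgeless case separately).
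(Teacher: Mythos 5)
The paper states this result without proof, citing Prodinger and Tichy, so there is no internal argument to compare against; your proposal must be judged on its own. It is correct. The deletion identity $i(G)=i(G-v)+i(G-N[v])$ is the standard tool here, and you apply it at a leaf $v$ so that $T-v$ remains a tree and $T-\{u,v\}$ is a forest on $n-2$ vertices. For the upper bound, pairing the inductive bound $i(T-v)\le 2^{n-2}+1$ with the trivial bound $i(H)\le 2^{|V(H)|}$ on the second term is exactly right, and your equality analysis correctly observes that tightness of the second term forces $T-\{u,v\}$ to be edgeless, which (together with $T-v\cong K_{1,n-2}$) pins $u$ to the center of the star. For the lower bound you correctly identify the real obstacle — the recursion leaves the class of trees — and the strengthening to forests is the right fix: it is consistent because a disconnected forest satisfies $i(F)=\prod_j i(F_j)\ge \prod_j F_{n_j+2} > F_{n+2}$ (by the addition formula $F_{a+2}F_{b+2} = F_{a+b+2} + (F_{a+2}-F_{a+1})F_b > F_{a+b+2}$ for $a,b\ge 1$), so equality can only occur for connected $F$, and your equality analysis (endpoint attachment forced by $P_{n-1}-u$ being connected) completes the characterization. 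The only things worth making fully explicit in a written version are the observation that $i(H)=2^{|V(H)|}$ forces $H$ edgeless, and the product inequality above justifying that the strengthened forest statement still has $P_n$ as its unique extremal graph.
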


Recall that a graph $G$ is said to be \emph{bipartite} if we can partition $V(G) = A\cup B$ such that $A$ and $B$ are independent. We study bipartite graphs in general, and then restrict our attention to trees (or connected, acyclic graphs), followed by benzenoids that are embeddable in the hexagonal lattice. A graph is \emph{unicyclic} if it contains exactly one cycle. Given a tree $T$, we call a vertex with degree $1$ a \emph{leaf}. A \emph{star}, denoted $K_{1, n}$, is a  vertex with $n$ leaves attached. 

\section{Bipartite Graphs}\label{sec:bipartite}
Recall that the girth of a graph $G$ is the length of the smallest cycle in $G$ and is denoted by $g(G)$. As in \cite{PV-2005}
we shall let $H_{n, k}$ denote the unicyclic graph obtained by attaching $n-k$ leaves to one vertex on a cycle of length $k$. The following was proven by Pederson and Vestergaard \cite{PV-2005}.

\begin{theorem}\cite{PV-2005} Given integers $k\ge 3$ and $n\ge k$, $i(H_{n,k}) = F_{k-1} + 2^{n-k}F_{k+1}$.
\end{theorem}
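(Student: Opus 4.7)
The plan is to condition on whether the central vertex $v$ (the vertex on the cycle with the $n-k$ pendant leaves attached) lies in the independent set. Label the cycle as $v = v_0, v_1, \ldots, v_{k-1}, v_0$, so that the neighbors of $v$ on the cycle are $v_1$ and $v_{k-1}$, and let $\ell_1, \ldots, \ell_{n-k}$ denote the pendant leaves at $v$. Any independent set $S$ of $H_{n,k}$ either contains $v$ or does not, and these two possibilities give a clean partition of the count $i(H_{n,k})$ to sum.

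If $v \in S$, then none of $v_1$, $v_{k-1}$, or the $n-k$ leaves can be in $S$. The remaining vertices form a path on $v_2, v_3, \ldots, v_{k-2}$, which is $P_{k-3}$ (interpreted as the empty graph when $k=3$, which still has exactly one independent set, namely $\emptyset$). By Theorem~\ref{thm:P_nC_n}, the number of ways to extend to an independent set on these vertices is $i(P_{k-3}) = F_{k-1}$.

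If $v \notin S$, then each leaf $\ell_j$ is an isolated vertex in what remains (its only neighbor $v$ is excluded), so each of the $2^{n-k}$ subsets of $\{\ell_1, \ldots, \ell_{n-k}\}$ contributes independently. The rest of the graph, restricted to the cycle minus $v$, is the path $v_1, v_2, \ldots, v_{k-1}$, i.e.\ $P_{k-1}$, contributing $i(P_{k-1}) = F_{k+1}$ independent sets by Theorem~\ref{thm:P_nC_n}. Multiplying gives $2^{n-k} F_{k+1}$ for this case. Summing the two cases yields
\[
i(H_{n,k}) \;=\; F_{k-1} + 2^{n-k} F_{k+1},
\]
as claimed. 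There is no serious obstacle here: the only subtlety is the convention $i(P_0)=1$ in the boundary case $k=3$, which matches $F_2 = 1$, so the formula is uniform across $k\ge 3$.
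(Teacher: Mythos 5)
Your proof is correct. Note that the paper does not prove this statement at all---it is quoted directly from Pedersen and Vestergaard \cite{PV-2005} as a known result---so there is no internal argument to compare against. Your case split on whether the high-degree cycle vertex $v$ belongs to the independent set is the natural one: excluding $v$ frees the $n-k$ leaves independently (factor $2^{n-k}$) and leaves the path $P_{k-1}$ on the remaining cycle vertices (factor $i(P_{k-1})=F_{k+1}$), while including $v$ kills the leaves and both cycle neighbors, leaving $P_{k-3}$ (factor $i(P_{k-3})=F_{k-1}$). Your handling of the boundary case $k=3$ via $i(P_0)=1=F_2$ is the one point where care is needed, and you address it explicitly. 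The argument is complete and checks out on small cases (e.g.\ $k=3$, $n=4$ gives $1+2\cdot 3=7$, which matches a direct enumeration).
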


\begin{theorem}\cite{PV-2005}\label{thm:unicyclic} Let $G$ be a unicyclic graph of order $n$ and cycle length $k$. Then $i(G) \le 2^{n-k}F_{k+1} + F_{k-1}$ and equality holds if and only if $G \cong H_{n, k}$. 
\end{theorem}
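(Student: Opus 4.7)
My plan is to induct on $n - k$, using the standard decomposition $i(G) = i(G - v) + i(G - \{v, u\})$ for a leaf $v$ with neighbor $u$ (obtained by conditioning on whether $v$ lies in the independent set). The base case $n = k$ reduces to $G = C_k$, so $i(G) = L_k = F_{k-1} + F_{k+1}$ by Theorem \ref{thm:P_nC_n}, matching the claimed bound with equality.

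For the inductive step $n > k$, I pick $v$ to be a leaf at maximum distance from the cycle $C_k$. Since $G - v$ is unicyclic of order $n - 1$ with the same cycle length $k$, the inductive hypothesis gives $i(G - v) \leq 2^{n-1-k} F_{k+1} + F_{k-1}$, so it is enough to prove $i(G - \{v, u\}) \leq 2^{n-k-1} F_{k+1}$. I split into two cases. If $u \notin V(C_k)$, the maximality of the distance forces every neighbor of $u$ other than its parent (on the path to the cycle) to be a leaf; letting $d$ be the resulting leaf-degree of $u$, the graph $G - \{v, u\}$ decomposes as a smaller unicyclic graph $G_0$ of order $n - d - 1$ together with $d - 1$ isolated vertices. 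Applying the inductive hypothesis to $G_0$, together with $d + 1 \leq n - k$ and $F_{k-1} < F_{k+1}$, gives the required bound strictly. If instead $u \in V(C_k)$, the maximality ensures every non-cycle vertex of $G$ is a pendant attached directly to a cycle vertex; writing $d$ for the number of pendants at $u$ and $b_1, \ldots, b_{k-1}$ for those at the remaining cycle vertices $w_1, \ldots, w_{k-1}$ (so $d + \sum b_j = n - k$), the graph $G - \{v, u\}$ is the disjoint union of $d - 1$ isolated vertices and a path-caterpillar $C^*$ with spine $P_{k-1}$ and $b_j$ pendants at $w_j$.

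The key estimate is
\[
i(C^*) = \sum_{S} 2^{\sum_{j \notin S} b_j} \leq 2^{\sum_j b_j} \cdot i(P_{k-1}) = 2^{n-k-d} F_{k+1},
\]
where the sum is over independent sets $S$ of the spine $P_{k-1}$, and equality holds if and only if each $b_j = 0$. Multiplying by $2^{d-1}$ delivers $i(G - \{v, u\}) \leq 2^{n-k-1} F_{k+1}$, and adding the inductive bound on $i(G - v)$ completes the inequality. I expect the most delicate part of the proof to be the uniqueness discussion: one must verify that equality throughout both applications of the inductive hypothesis, in addition to the path-caterpillar estimate, pins the extremum down to $G \cong H_{n,k}$. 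Tracking the equality conditions in the sub-bound $i(G_0) \leq 2^{n - d - 1 - k} F_{k+1} + F_{k-1}$ in Case 1 and the vanishing of each $b_j$ in Case 2 should combine to rule out every other unicyclic graph.
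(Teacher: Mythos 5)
This theorem is not proved in the paper at all: it is imported verbatim from Pedersen and Vestergaard \cite{PV-2005}, so there is no in-paper argument to compare against. Your proof, read on its own, is correct and self-contained. The decomposition $i(G)=i(G-v)+i(G-\{v,u\})$ at a leaf of maximum distance from the cycle, the reduction to showing $i(G-\{v,u\})\le 2^{n-k-1}F_{k+1}$, and the two-case analysis all check out: in Case 1 the inequality $2^{d-1}F_{k-1}<2^{n-k-2}F_{k+1}$ (from $d+1\le n-k$ and $F_{k-1}<F_{k+1}$) does give strictness, and in Case 2 the caterpillar identity $i(C^*)=\sum_S 2^{\sum_{j\notin S}b_j}\le 2^{\sum_j b_j}\,i(P_{k-1})$ is right, with equality forcing every $b_j=0$ because each singleton $\{w_j\}$ is an independent set of the spine. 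The only place you undersell yourself is the uniqueness discussion, which you flag as delicate but which in fact falls out immediately from what you have already established: Case 1 always yields strict inequality, so equality forces Case 2 with all $b_j=0$, i.e.\ all $n-k$ pendants sit on the single cycle vertex $u$, which is exactly $H_{n,k}$; you do not need to track equality in the inductive bound on $i(G-v)$ at all (though it is consistent, since $H_{n,k}-v\cong H_{n-1,k}$). Combined with the direct computation $i(H_{n,k})=F_{k-1}+2^{n-k}F_{k+1}$ for the converse direction, this gives a complete and arguably cleaner proof than leaving the result as a citation.
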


We use the above to provide a general upper bound for the number of independent sets in a bipartite graph.

\begin{theorem} If $G$ is a connected, bipartite graph on $n$ vertices with girth $g\geq 4$, then $i(G) \le 2^{n-g}F_{g+1} + F_{g-1}$.
\end{theorem}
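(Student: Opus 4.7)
The plan is to reduce to the unicyclic case, where Theorem~\ref{thm:unicyclic} already supplies the matching upper bound. Two simple observations drive the reduction. First, for any edge $e = uv$ of a graph $G$, every independent set of $G$ remains independent in $G - e$, so $i(G) \le i(G - e)$; iterating gives $i(G) \le i(H)$ for every spanning subgraph $H$ of $G$. Second, any connected graph with girth $g$ contains a spanning unicyclic subgraph whose unique cycle has length exactly $g$.

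To produce such a subgraph, I would fix a shortest cycle $C$ of $G$, which has length $g$, choose any edge $e \in E(C)$, and set $P = C - e$. The graph $G - e$ is still connected, since the endpoints of $e$ are joined in $G - e$ by the path $P$. Because $P$ is a tree and $G - e$ is connected, I can extend $P$ to a spanning tree $T$ of $G - e$ (add edges greedily without creating cycles). Let $H = T \cup \{e\}$. Adding a single edge to a tree creates exactly one cycle, so $H$ is a spanning unicyclic subgraph of $G$ whose unique cycle is $C$, of length $g$.

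Combining the two observations, $i(G) \le i(H)$, and applying Theorem~\ref{thm:unicyclic} to the unicyclic graph $H$ on $n$ vertices with cycle length $g$ yields $i(H) \le 2^{n-g} F_{g+1} + F_{g-1}$, as required. There is no serious obstacle in this argument; the only step that warrants attention is the connectivity of $G - e$, which is immediate from the existence of the alternative $u$-$v$ path $P$ inside the girth cycle. I note that bipartiteness is not actually used in the proof sketched above, but it is consistent with the statement since a bipartite graph with a cycle automatically has even girth at least $4$.
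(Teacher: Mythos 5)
Your proof is correct and follows essentially the same route as the paper: reduce to a unicyclic graph whose cycle is a girth cycle and invoke Theorem~\ref{thm:unicyclic}. The paper phrases the reduction as an extremal/edge-deletion argument while you explicitly construct a spanning unicyclic subgraph containing the girth cycle, which is a slightly more careful execution of the same idea.
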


\begin{proof} %Suppose first that $G$ is a tree and therefore $g(G) = 2$. By Theorem~\ref{thm:trees}, $i(T) \le 2^{n-1}+1\le 2^{n-2}F_3 + F_2$.%
 We claim for any $n\ge 4$ and $g\ge 4$ that the bipartite graph with the maximum number of independent sets is $H_{n, g}$. Indeed, let $G$ be bipartite with the maximum number of independent sets among all bipartite graphs with fixed order $n$ and girth $g\ge 4$. Moreover, let $C$ be a cycle of length $g$ in $G$. If $G$ contains another cycle $C'$, then for any edge $e$ on $C'$ and not on $C$, $G-e$ is bipartite with order $n$ and girth $g$ and yet $G-e$ contains the same number of independent sets by our choice of $G$. Therefore, we may assume that $G$ is unicyclic. By Theorem~\ref{thm:unicyclic}, $i(G) \le 2^{n-g}F_{g+1} + F_{g-1}$ and equality holds if and only if $G\cong H_{n, g}$. It now follows that for any bipartite graph $G$ with order $n$ and girth $g\ge 4$ that $i(G) \le i(H_{n, g}) =2^{n-g}F_{g+1}+ F_{g-1}$.
\end{proof}

Next, we note that the following was proven in \cite{PV-2005}.

\begin{theorem}\cite{PV-2005} Let $G$ denote a connected graph. If $G$ is not a tree, then $i(G) \le 3\cdot 2^{n-3} +1$ where equality holds if and only if $G$ is a $4$-cycle or $G \cong H_{n,3}$.
\end{theorem}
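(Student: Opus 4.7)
The plan is to reduce to the unicyclic case (where Theorem~\ref{thm:unicyclic} applies) and then verify a short numerical inequality. Since $G$ is connected and not a tree, I take any spanning tree $T$ of $G$ together with any single non-tree edge $e$ to form a unicyclic spanning subgraph $G' = T + e$ of $G$. Deleting an edge $uv$ can only create new independent sets (in particular, $\{u,v\}$ itself becomes independent), so $i(G) \leq i(G')$ with strict inequality whenever $G \neq G'$. Letting $g'$ denote the length of the unique cycle of $G'$, Theorem~\ref{thm:unicyclic} yields
\[
i(G') \leq 2^{n-g'}F_{g'+1} + F_{g'-1},
\]
with equality iff $G' \cong H_{n, g'}$.

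It then suffices to prove that for every $g \geq 3$ and every $n \geq g$,
\[
2^{n-g}F_{g+1} + F_{g-1} \;\leq\; 3\cdot 2^{n-3} + 1,
\]
with equality exactly when $g = 3$ or when $(g, n) = (4, 4)$. I would split on $g$. When $g = 3$, since $F_4 = 3$ and $F_2 = 1$, the left side equals $3 \cdot 2^{n-3} + 1$ identically. When $g = 4$, the left side is $5\cdot 2^{n-4} + 2$, which is at most $6\cdot 2^{n-4} + 1 = 3\cdot 2^{n-3}+1$, with equality iff $n = 4$ (corresponding to $G' \cong H_{4,4} \cong C_4$). For $g \geq 5$, a direct induction on $g$ gives $L_g = F_{g+1} + F_{g-1} < 3\cdot 2^{g-3}$ (base cases $L_5 = 11 < 12$ and $L_6 = 18 < 24$; the inductive step uses $L_g = L_{g-1} + L_{g-2} < 3\cdot 2^{g-4} + 3\cdot 2^{g-5} = 9\cdot 2^{g-5} < 12\cdot 2^{g-5}$). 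Hence $2^{n-g}F_{g+1} + F_{g-1} \leq 2^{n-g} L_g < 3\cdot 2^{n-3}$, giving strict inequality for all $n \geq g$.

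Overall equality forces every intermediate bound to be tight: $G = G'$ (so $G$ is already unicyclic), $G' \cong H_{n, g'}$, and $(g', n)$ lies in the equality locus of the numerical claim, which leaves exactly $G \cong H_{n, 3}$ and $G \cong C_4$. The only mildly technical step is the $g \geq 5$ case of the numerical inequality, but because the Fibonacci/Lucas growth rate $\phi \approx 1.618$ is less than $2$, the ratio $L_g / 2^{g-3}$ tends to $0$, so the induction is essentially immediate after checking two base cases; verifying the strictness $i(G) < i(G-e)$ used in the equality analysis is the other spot requiring a sentence of care, but follows at once from the fact that the endpoints of the deleted edge form a new independent pair.
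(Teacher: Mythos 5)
Your proof is correct. Note that the paper does not prove this statement at all --- it is quoted verbatim from Pedersen and Vestergaard \cite{PV-2005} --- so there is no internal proof to compare against; but your strategy (pass to a unicyclic spanning subgraph $T+e$, apply Theorem~\ref{thm:unicyclic}, then settle the numerical comparison $2^{n-g}F_{g+1}+F_{g-1}\le 3\cdot 2^{n-3}+1$ by splitting on $g$) is exactly the technique the paper itself uses to prove its girth-$g$ bound for bipartite graphs, and all the steps check out: the monotonicity of $i$ under edge deletion is strict when an edge is actually removed, the $g=3$ case is an identity, the $g=4$ case is tight only at $n=4$ (giving $C_4=H_{4,4}$), and the Lucas-number estimate $L_g<3\cdot 2^{g-3}$ for $g\ge 5$ closes the remaining cases strictly.
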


We next provide an upper bound when $G$ is a tree and contains a perfect matching. 

\begin{theorem}
If $G$ is a tree of order $2n$ and contains a perfect matching, then $i(G) \le 2\cdot 3^{n-1} + 2^{n-1}$ with equality if and only if $G$ is obtained from a star $K_{1, n}$ by appending one leaf to exactly $n-1$ leaves of $K_{1, n}$. 
\end{theorem}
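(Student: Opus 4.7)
The plan is to prove the bound by induction on $n$, strengthened with a companion estimate on the effect of deleting a single vertex. Specifically, I will prove simultaneously that (P1) $i(T) \le 2\cdot 3^{n-1} + 2^{n-1}$ for every tree $T$ of order $2n$ with a perfect matching, and (P2) $i(T-v) \le 2\cdot 3^{n-1}$ for every such $T$ and every $v \in V(T)$. The base case $n=1$ is $T = P_2$, where $i(T)=3$ and $i(T-v)=2$.

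The structural engine is that in any tree $T$ with a perfect matching $M$ and at least four vertices, there is a leaf $\ell$ whose unique neighbor $u$ has $\deg(u)=2$. To see this, choose $\ell$ to be an endpoint of a longest path $P$; then $M$ must contain $u\ell$, and any further neighbor $x$ of $u$ off $P$ would have to be a leaf (else $P$ could be extended), forcing $x$ to also be matched to $u$, a contradiction. Letting $w$ be the other neighbor of $u$ and $T_1 := T-\{\ell,u\}$, a tree of order $2(n-1)$ with inherited perfect matching, two applications of the deletion recurrence $i(G) = i(G-x) + i(G-N[x])$ (first at $\ell$ in $T$, then at the new leaf $u$ in $T-\ell$) yield
\[
i(T) = 2\,i(T_1) + i(T_1 - w).
\]
Invoking (P1) and (P2) at $n-1$ then gives $i(T) \le 2\bigl(2\cdot 3^{n-2}+2^{n-2}\bigr) + 2\cdot 3^{n-2} = 2\cdot 3^{n-1}+2^{n-1}$, proving the inductive step of (P1).

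For the inductive step of (P2), let $v'$ be the partner of $v$ under $M$. Splitting independent sets of $T-v$ according to whether they contain $v'$ gives
\[
i(T-v) = i\bigl(T-\{v,v'\}\bigr) + i\bigl(T - N_T[v']\bigr) \le 2\,i\bigl(T-\{v,v'\}\bigr),
\]
since deleting additional vertices can only decrease $i$. Now $T-\{v,v'\}$ is a forest whose components $F_1,\dots,F_k$ are trees of order $2p_j$ with perfect matchings and $\sum p_j = n-1$. Applying (P1) to each $F_j$ together with the elementary bound $2\cdot 3^{p-1}+2^{p-1} \le 3^p$ (which is equivalent to $2^{p-1}\le 3^{p-1}$) yields
\[
i\bigl(T-\{v,v'\}\bigr) = \prod_j i(F_j) \le \prod_j 3^{p_j} = 3^{n-1},
\]
so $i(T-v) \le 2\cdot 3^{n-1}$.

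The main obstacle will be tracing the equality case of (P1). Tightness in $i(T)=2\,i(T_1)+i(T_1-w) \le 2\cdot 3^{n-1}+2^{n-1}$ forces simultaneous equality in both (P1) and (P2) at $n-1$, so by induction $T_1$ is the extremal graph on $2(n-1)$ vertices. The (P2)-step is tight only if $N_{T_1}(w') = \{w\}$ (so the deleted set $N_{T_1}[w']\setminus\{w,w'\}$ is empty) and every component of $T_1-\{w,w'\}$ is a $P_2$ (since $2\cdot 3^{p-1}+2^{p-1}=3^p$ only when $p=1$). Within the explicit structure of the extremal $T_1$, the only vertex $w$ meeting both conditions (for $n\ge 4$) is its central vertex; attaching the $\ell$-$u$ leg back at that center reconstructs exactly the claimed extremal $T$. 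The remaining small cases $n\le 3$ can be verified by direct computation.
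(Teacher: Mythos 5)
Your argument is correct, and its skeleton coincides with the paper's: both proofs induct on $n$, both locate a pendant matched edge $\ell u$ whose inner vertex $u$ has degree two (you via a longest path, the paper via a leaf of the auxiliary tree $G'$ obtained by contracting the matching edges), and both then use the identical recurrence $i(T)=2\,i(T_1)+i(T_1-w)$. Where you genuinely diverge is in handling the cross term $i(T_1-w)$. The paper rewrites it as $i(T_1)-i\left(T_1-N[w]\right)$ and lower-bounds the subtracted quantity by $2^{n-2}$ by exhibiting the independent subsets of the $n-2$ vertices of one side of the bipartition that avoid $N[b_{n-1}]$; you instead prove the standalone companion bound (P2), $i(T-v)\le 2\cdot 3^{n-1}$, by a different decomposition entirely --- splitting at the matching partner $v'$, noting that $T-\{v,v'\}$ is a forest of matched trees, and using multiplicativity of $i$ over components together with $2\cdot 3^{p-1}+2^{p-1}\le 3^{p}$. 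The two estimates are interchangeable in the inductive step (they give the same final bound), but your route is more self-contained, never invoking the bipartition $V(G)=A\cup B$, and it buys a more systematic equality analysis: tightness of (P2) forces $w'$ to be a pendant leaf of $w$ and forces $T_1-\{w,w'\}$ to shatter into copies of $K_2$, which for $n\ge 4$ identifies $w$ as the center of the extremal $T_1$, whereas the paper must argue separately that $b_{n-1}$ is adjacent to every vertex of $A$. I checked the details you left as a sketch (the verification that only the central vertex of the extremal $T_1$ satisfies both tightness conditions when $n\ge 4$, and the small cases $n\le 3$, where for $n=3$ two choices of $w$ work but both reconstruct the same extremal tree); they go through as you indicate.
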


\begin{proof} Let $M$ be a perfect matching in $G$. Write $V(G) = A \cup B$ and enumerate $A =\{a_1, \dots, a_n\}$ and $B = \{b_1, \dots, b_n\}$ such that $a_ib_i \in M$ for $i \in [n]$. We proceed by induction on $n$. Note that if $n=1$, then $G = K_2$ and $i(K_2) = 3 = 2\cdot 3^0 + 2^0$. Next, suppose $n=2$. It follows that $G\in \{P_4, C_4\}$ and $i(C_4) \le i(P_4)=F_6=8=2\cdot 3^1+ 2^1$. Thus, we shall assume for all trees of order $2(n-1)$ for some $n \in \mathbb{N}$ that contain a perfect matching that $i(G) \le 2\cdot 3^{n-2} + 2^{n-2}$. Now suppose $G$ is a tree of order $2n$ that contains a perfect matching. Let $G'$ be the graph with $V(G') = \{c_1, \dots, c_n\}$ such that $c_ic_j \in E(G')$ if either $a_i$ is adjacent to $b_j$ or $b_i$ is adjacent to $a_j$. Since $G$ is connected, so too is $G'$ and therefore has at least $n-1$ edges. Moreover, since removing edges always increases the number of independent sets, $G$ has the greatest number of independent sets when it has exactly $n-1$ edges outside the perfect matching. This implies that $G'$ is a tree. Reindexing the vertices in $G$ and $G'$ if necessary, we may assume $c_n$ is a leaf in $G'$. Hence, we may assume, after possible reindexing, $b_n$ is a leaf in $G$ and $N_G(a_n) = \{b_n, b_{n-1}\}$. Let $I$ be an independent set of $G$ and consider the following three cases: 1) $I \cap \{a_n, b_n\} = \emptyset$, 2) $I\cap \{a_n, b_n\} = \{b_n\}$, or 3) $I \cap \{a_n, b_n\} = \{a_n\}$. The first two cases reduce to finding the number of independent sets in $G - \{a_n, b_n\}$, which is a tree of order $2(n-1)$ containing a perfect matching. The third case reduces to finding the number of independent sets in $G - \{a_n, b_n, b_{n-1}\}$. This gives 
\begin{eqnarray*}
i(G) &=& 2i(G - \{a_n, b_n\}) + i(G - \{a_n, b_n, b_{n-1}\})\\
&=&2i(G - \{a_n, b_n\}) + i(G-\{a_n, b_n\}) - i\left(G- \left(\{a_n, b_n\}\cup N[b_{n-1}]\right)\right)\\
&=& 3i(G - \{a_n, b_n\}) - i\left(G- \left(\{a_n, b_n\}\cup N[b_{n-1}]\right)\right)\\
&\le&3(2\cdot 3^{n-2}+2^{n-2}) - i\left(G- \left(\{a_n, b_n\}\cup N[b_{n-1}]\right)\right)\\
\end{eqnarray*}
with the induction hypothesis used in the last inequality. To maximize the last line above, we must minimize $i\left(G- \left(\{a_n, b_n\}\cup N[b_{n-1}]\right)\right)$. Note that \[2^{n-2} \le i\left(G- \left(\{a_n, b_n\}\cup N[b_{n-1}]\right)\right)\] as any independent set $I\subseteq \cup_{i=1}^{n-2}\{b_i\}$ is an independent set of $i\left(G- \left(\{a_n, b_n\}\cup N[b_{n-1}]\right)\right)$. Hence, 
\begin{eqnarray*}
i(G) &\le& 3(2\cdot 3^{n-2}+2^{n-2}) - i\left(G- \left(\{a_n, b_n\}\cup N[b_{n-1}]\right)\right)\\
& \le& 3(2\cdot 3^{n-2}+2^{n-2}) -2^{n-2}\\
&=&2\cdot3^{n-1} + 2^{n-1}.
\end{eqnarray*}
Furthermore, if $i(G) = 2\cdot3^{n-1} + 2^{n-1}$, then we have equality throughout the above. This implies $i (G- \{a_n, b_n\}) = 2\cdot 3^{n-2} + 2^{n-2}$ and by the induction hypothesis, $G - \{a_n, b_n\}$ is obtained from the star $K_{1, n-1}$ by appending one leaf to exactly $n-2$ leaves of $K_{1, n-1}$. We also know that  $i\left(G- \left(\{a_n, b_n\}\cup N[b_{n-1}]\right)\right)= 2^{n-2}$ meaning that $b_{n-1}$ is adjacent to each vertex in $A$. That is, $b_{n-1}$ is the center of the star $K_{1, n-1}$ and each vertex of $\{b_1, \dots, b_{n-2}\}$ are leaves in $G - \{a_n, b_n\}$. Moreover, $c_n$ being a leaf in $G'$ implies that the only vertex in $G- \{a_n, b_n\}$ adjacent to $a_n$ is $b_{n-1}$. It follows that $G$ is obtained from the star $K_{1, n}$ by appending one leaf to exactly $n-1$ leaves of $K_{1, n}$. 

\end{proof}

\section{Trees}\label{sec:trees}
First, we begin with an exact value for the number of independent sets in ``balanced" caterpillars. Recall that a caterpillar is a tree $T$ such that if all leaves were removed from $T$, the resulting graph is a path. The underlying path in a caterpillar is commonly referred to as the spine. We will say that a caterpillar is ``balanced" if each vertex of the spine has exactly $k$ leaves for a fixed number $k\in \mathbb{N}$. We use the following result due to Hopkins and Staton \cite{HS-1984}.

\begin{lemma}\cite{HS-1984}\label{lem:paths} The number of independent sets of size $k$ in $P_n$ is 
\[\binom{n-k+1}{k}.\]
\end{lemma}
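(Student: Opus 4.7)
The plan is to give a direct bijective proof, since the quantity $\binom{n-k+1}{k}$ is asking to be realized as the number of $k$-subsets of an $(n-k+1)$-element set. Label the vertices of $P_n$ as $v_1, v_2, \ldots, v_n$ in order along the path. An independent set of size $k$ in $P_n$ corresponds to a choice of indices $1 \le i_1 < i_2 < \cdots < i_k \le n$ subject to the gap condition $i_{t+1} \ge i_t + 2$ for each $t \in \{1, \ldots, k-1\}$, since two vertices are adjacent in $P_n$ exactly when their indices differ by $1$.

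I would then define the map $\varphi(i_1, \ldots, i_k) = (i_1, i_2 - 1, i_3 - 2, \ldots, i_k - (k-1))$. The plan is to verify that the gap condition $i_{t+1} \ge i_t + 2$ is equivalent to the strict inequality $(i_{t+1} - t) > (i_t - (t-1))$, so that $\varphi$ sends independent $k$-subsets of $V(P_n)$ to strictly increasing $k$-tuples. The first coordinate is at least $1$ and the last coordinate is at most $i_k - (k-1) \le n - k + 1$, so the image lies in the set of $k$-subsets of $\{1, 2, \ldots, n-k+1\}$. For a two-line proof one only needs to write down the inverse $(j_1, \ldots, j_k) \mapsto (j_1, j_2 + 1, \ldots, j_k + (k-1))$ and check it produces a valid independent set, which is immediate.

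The main (and only) potential obstacle is being careful about boundary cases, in particular small values of $k$ and $n$: when $k = 0$ the empty set is the unique independent set and $\binom{n+1}{0} = 1$, and when $k > \lceil n/2 \rceil$ there are no independent sets of size $k$ while $n - k + 1 < k$ forces $\binom{n-k+1}{k} = 0$. Both edge cases are handled uniformly by the bijection, so no separate argument is needed.

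As an alternative I would mention only briefly: one could instead induct on $n$, using the standard split on whether $v_n$ lies in the independent set to obtain the recurrence $f(n,k) = f(n-1,k) + f(n-2,k-1)$, and then verify that $\binom{n-k+1}{k}$ satisfies the same recurrence via Pascal's identity $\binom{n-k+1}{k} = \binom{n-k}{k} + \binom{n-k}{k-1}$, together with the base cases $f(1,0) = f(1,1) = 1$. I would favor the bijective proof for brevity, but the inductive route is a clean backup if a more self-contained argument is desired later in the paper.
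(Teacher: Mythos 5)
Your bijective argument is correct and complete: the shift $\varphi(i_1,\ldots,i_k)=(i_1,i_2-1,\ldots,i_k-(k-1))$ is the standard bijection between independent $k$-sets in $P_n$ and $k$-subsets of $\{1,\ldots,n-k+1\}$, and your handling of the boundary cases is fine. The paper itself offers no proof of this lemma --- it is quoted directly from Hopkins and Staton \cite{HS-1984} --- so there is nothing to compare against; your write-up would serve as a self-contained justification if one were wanted.
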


In the next two results, given a caterpillar $T$, we give the exact value for $i(T)$ if $T$ is  balanced  and we bound $i(T)$ otherwise. 

\begin{proposition}\label{prop:caterpillar} Let $G$ be a caterpillar obtained from the path $P_n= x_1x_2\cdots x_n$ for any $n\ge 2$ by appending exactly $k\ge 1$ leaves to each vertex of $P_n$. Then 
\[i(G) = \sum_{j=0}^{\left\lceil\frac{n}{2}\right\rceil} {{n+1-j}\choose {j}} 2^{(n-j)k}.\]
\end{proposition}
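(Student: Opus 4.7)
The plan is to count independent sets of $G$ by partitioning them according to which spine vertices they contain, exploiting the fact that the leaves attached to different spine vertices are independent from each other given a choice of spine vertices.

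First, I would observe the structural fact: every leaf of $G$ is adjacent to exactly one spine vertex $x_i$, and no two leaves are adjacent. Therefore, if $I$ is an independent set of $G$ and $S = I \cap V(P_n)$, then (i) $S$ must be an independent set of $P_n$, and (ii) for each spine vertex $x_i \notin S$, the $k$ leaves attached to $x_i$ can be independently included or excluded in any way, whereas for each $x_i \in S$ none of its $k$ leaves may appear in $I$. Consequently, once $S$ is fixed with $|S| = j$, the number of ways to complete $S$ to an independent set of $G$ is exactly $(2^k)^{n-j} = 2^{(n-j)k}$, since each of the $n-j$ unused spine vertices contributes $k$ independent binary choices.

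Next, I would sum over the possible choices of $S$. Grouping by size $j = |S|$, and invoking Lemma~\ref{lem:paths} of Hopkins--Staton to count the number of independent sets of size $j$ in $P_n$ as $\binom{n-j+1}{j}$, I obtain
\[
i(G) \;=\; \sum_{j \ge 0} (\text{number of independent $S \subseteq V(P_n)$ with $|S|=j$}) \cdot 2^{(n-j)k} \;=\; \sum_{j=0}^{\lceil n/2\rceil} \binom{n+1-j}{j}\, 2^{(n-j)k},
\]
where the upper limit $\lceil n/2 \rceil$ reflects that $\alpha(P_n) = \lceil n/2 \rceil$ (equivalently, $\binom{n+1-j}{j} = 0$ for larger $j$). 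This is the desired formula.

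There is no real obstacle here: the argument rests entirely on the observation that once the spine portion of the independent set is fixed, the leaf portions decouple completely across different spine vertices. The only minor care needed is to verify that the decomposition $I \mapsto (S, I \setminus S)$ is a genuine bijection between independent sets of $G$ and pairs $(S, L)$ where $S$ is an independent subset of the spine and $L$ is any subset of the leaves attached to $V(P_n) \setminus S$; this follows immediately from the fact that leaves of $G$ have no neighbors outside the spine. The empty independent set is automatically included in the $j=0$ term, which contributes $2^{nk}$, consistent with the formula.
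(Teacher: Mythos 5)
Your proposal is correct and follows essentially the same route as the paper: condition on the set $S$ of spine vertices in the independent set, count the choices of $S$ with $|S|=j$ via Lemma~\ref{lem:paths}, and observe that the leaves attached to the $n-j$ unused spine vertices contribute a free factor of $2^{(n-j)k}$. Your added remark that the decomposition is a genuine bijection is a small extra care the paper leaves implicit, but the argument is the same.
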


\begin{proof} Let $I$ be any independent set in $G$. Note that $I$ contains $j$ vertices from the spine. From Lemma~\ref{lem:paths}, the number of ways to select $j$ vertices from the spine is ${{n+1-j}\choose {j}}$. The remaining vertices in $I$ are leaves in $G$ that are not adjacent to the $j$ vertices of $I$ on the spine. Since each vertex on the spine has $k$ leaves, there are $k(n-j)$ vertices to choose from. That is, there are $2^{(n-j)k}$ ways to select the remaining vertices in $I$. Summing over all $j$ yields 
\[  \sum_{j=0}^{ \left\lceil\frac{n}{2}\right\rceil} {{n+1-j}\choose {j}} 2^{(n-j)k}.\]

\end{proof}

Note that the following upper bound was given in \cite{PV-2004}. 

\begin{theorem}\cite{PV-2004}\label{thm:upper} Let $T$ denote a tree of order $n\ge 2$ and diameter $d$. Then 
\[i(T) \le F_d + 2^{n-d}F_{d+1}.\]
\end{theorem}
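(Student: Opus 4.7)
The plan is to induct on $n$, leveraging the structure of a diametral path $v_0 v_1 \cdots v_d$ together with the standard vertex-deletion identity $i(G) = i(G - v) + i(G - N[v])$. The base case $n = d + 1$ forces $T = P_{d+1}$; by Theorem~\ref{thm:P_nC_n} and the Fibonacci recurrence $F_{d+3} = F_d + 2 F_{d+1}$, the bound is attained with equality.

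For the inductive step, the key observation is that any neighbor of $v_1$ besides $v_2$ must be a leaf, else we could extend the diametral path beyond $v_0$. Writing $v_0, u_1, \dots, u_k$ for these leaf-neighbors and $T^*$ for the component of $T - v_1$ containing $v_2$, applying the deletion identity at $v_1$ and decomposing $T - v_1$ into $k + 1$ isolated vertices together with $T^*$ yields the central recursion
\[
i(T) = 2^{k+1}\, i(T^*) + i(T^* - v_2).
\]
I would then compare $T$ with the smaller tree $T' := T - \{u_1, \ldots, u_k\}$, which retains diameter $d$ and for which the identity reads $i(T') = 2\, i(T^*) + i(T^* - v_2)$. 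Subtracting gives $i(T) = i(T') + (2^{k+1} - 2)\, i(T^*)$, after which the inductive hypothesis on $T'$ together with the short arithmetic identity $2^{n-k-d} F_{d+1} + (2^{k+1}-2)\cdot 2^{n-k-d-1} F_{d+1} = 2^{n-d} F_{d+1}$ shifts the entire burden to proving the auxiliary estimate $i(T^*) \le 2^{n - k - d - 1} F_{d+1}$. Degenerate situations where $v_1$ has degree two are handled either by applying the argument symmetrically at $v_{d-1}$, or (when both endpoints of $P$ have degree-two neighbors) by pivoting on a deepest pendant leaf in an interior branch attached at some $v_i$ with $2 \le i \le d - 2$, whose depth is bounded by $\min(i, d-i)$.

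I expect the main obstacle to be this auxiliary estimate on $i(T^*)$. The relevant structural input is that $v_2$ has eccentricity at most $d - 2$ in $T^*$, since any longer branch from $v_2$ in $T$ would combine with $v_0 v_1$ to exceed the diameter. This motivates a self-contained lemma of the same flavor as the theorem: any tree $S$ with a distinguished vertex $v$ of eccentricity at most $e$ satisfies $i(S) \le 2^{|V(S)| - e - 1} F_{e+3}$, with equality when $S = P_{e+1}$ rooted at an endpoint. I would prove the lemma by a parallel induction on $|V(S)|$, again pivoting at the second vertex of a longest path from $v$ in $S$. Once the lemma is in hand, substituting back into the recursion for $i(T)$ closes the induction and yields the stated bound.
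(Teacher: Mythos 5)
This theorem is quoted in the paper from Pedersen and Vestergaard \cite{PV-2004} without proof, so your argument must stand on its own; evaluated that way, the skeleton of your main case is sound (the identity $i(T)=2^{k+1}i(T^*)+i(T^*-v_2)$, the comparison with $T'$, and the arithmetic reducing everything to $i(T^*)\le 2^{n-k-d-1}F_{d+1}$ all check), but there are two genuine gaps. First, your auxiliary lemma is false as stated: with ``eccentricity \emph{at most} $e$'' it fails already for $S=K_{1,3}$ with $v$ the center and $e=3$, where it would give $i(S)\le 2^{0}F_{6}=8$ while $i(K_{1,3})=9$. You need eccentricity \emph{exactly} $e$ (and indeed $v_2$ has eccentricity exactly $d-2$ in $T^*$ because $v_d\in T^*$), but then the ``parallel induction'' you sketch is in trouble, since deleting vertices can lower the eccentricity of $v$, and the weakened ``at most'' form that such an induction would naturally invoke is precisely the false one. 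The clean repair is to drop the lemma: $T^*$ has $n-k-2\ge d-1$ vertices and diameter $d^*\in\{d-2,d-1,d\}$ (it contains $v_2\cdots v_d$), so the theorem's own inductive hypothesis gives $i(T^*)\le F_{d^*}+2^{(n-k-2)-d^*}F_{d^*+1}$, and a short three-case Fibonacci computation shows each of these is at most $2^{n-k-d-1}F_{d+1}$.

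Second, the degenerate case is not actually handled. If $v_1$ and $v_{d-1}$ both have degree two, then $T'=T$ and your induction makes no progress; your fallback of pivoting at a deepest pendant leaf of an interior branch has exactly the same defect whenever that leaf's support vertex has only one leaf child, which is the generic situation, and the remark that the branch depth is at most $\min(i,d-i)$ is not attached to any argument. Nor does the crude estimate $i(T^*-v_2)\le i(T^*)$ rescue the identity $i(T)=2i(T^*)+i(T^*-v_2)$: when $\mathrm{diam}(T^*)=d-2$ it yields at best $3F_{d-2}+3\cdot 2^{n-d}F_{d-1}$, which exceeds the target because $3F_{d-1}>2F_{d-1}+F_{d-2}$. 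One way to avoid the case split entirely is to pivot at $v_0$ instead of $v_1$: writing $i(T)=i(T-v_0)+2^{k}i(T^*)$, bounding $i(T-v_0)$ by induction (its diameter equals $d$ when $k\ge1$ and is at least $d-1$ when $k=0$), bounding $i(T^*)$ as above, and using $k\le n-d-1$ closes the induction uniformly. As written, however, your proposal rests on a false lemma and omits the case where no penultimate vertex of the diametral path carries an extra pendant leaf.
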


Using the above result, we bound $i(T)$ where $T$ is any caterpillar.

\begin{corollary} Let $G$ be a caterpillar obtained from the path $P_n = x_1x_2\cdots x_n$ for any $n\ge 2$ by appending exactly $k_i \ge 0$ leaves to $x_i$ for $i \in [n]$. If $\ell = \min_{i \in [n]} k_i$, then 

\[\sum_{j=0}^{\left\lceil\frac{n}{2}\right\rceil} {{n+1-j}\choose {j}} 2^{(n-j)\ell} \le i(T) \le F_{n+1} + 2^{|V(T)|-(n+1)}F_{n+2}.\]
\end{corollary}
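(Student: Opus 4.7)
The plan is to treat the two inequalities separately, since each follows fairly directly from tools already developed in the paper. The lower bound will come from comparing $T$ to the balanced caterpillar with $\ell$ leaves at every spine vertex and invoking Proposition~\ref{prop:caterpillar}, while the upper bound will come from applying Theorem~\ref{thm:upper} once I identify the diameter of $T$.

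For the lower bound, I would let $T_\ell$ denote the balanced caterpillar obtained from $P_n$ by appending exactly $\ell$ leaves to every spine vertex. Since $k_i \ge \ell$ for all $i$, the caterpillar $T$ is obtained from $T_\ell$ by successively attaching additional pendant leaves at the spine vertices. The key monotonicity fact is that attaching a new pendant $v$ adjacent to some vertex $u$ of a graph $H$ yields $i(H+v) = i(H) + i(H-u) \ge i(H)$, since $i(H-u) \ge 1$. Iterating this gives $i(T) \ge i(T_\ell)$. When $\ell \ge 1$, Proposition~\ref{prop:caterpillar} evaluates $i(T_\ell)$ to exactly $\sum_{j=0}^{\lceil n/2\rceil}\binom{n+1-j}{j}2^{(n-j)\ell}$, which is the left-hand side. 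The edge case $\ell=0$ collapses $T_\ell$ to $P_n$ and the sum to $\sum_{j}\binom{n+1-j}{j}$, which by Lemma~\ref{lem:paths} summed over sizes equals $i(P_n) = F_{n+2}$ by Theorem~\ref{thm:P_nC_n}, so both cases match.

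For the upper bound, I would observe that the spine $x_1 x_2 \cdots x_n$ has length $n-1$ and that the longest path in $T$ runs from a leaf attached to $x_1$, along the spine, to a leaf attached to $x_n$, and therefore has length $n+1$. Consequently $\mathrm{diam}(T) = n+1$, and plugging $d = n+1$ (with order $|V(T)|$ in place of the ``$n$'' appearing in Theorem~\ref{thm:upper}) yields precisely $i(T) \le F_{n+1} + 2^{|V(T)|-(n+1)}F_{n+2}$. The only real subtlety — and what I would flag as the main obstacle — is that this diameter computation relies on $k_1, k_n \ge 1$, which is forced by the standard convention that the spine of a caterpillar is the subgraph remaining after deleting every leaf; once that convention is in place, the remainder of the argument is straightforward bookkeeping.
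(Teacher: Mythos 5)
The paper states this corollary with no proof at all, so there is nothing to compare your argument against except the two results it is meant to follow from; your derivation is clearly the intended one. Your lower bound is correct and complete: $i(H+v)=i(H)+i(H-u)\ge i(H)$ for a pendant $v$ attached at $u$, iterated from the balanced caterpillar $T_\ell$ up to $T$, with Proposition~\ref{prop:caterpillar} evaluating $i(T_\ell)$ when $\ell\ge 1$; and you are right that $\ell=0$ needs separate handling since the proposition assumes $k\ge 1$, and that the sum then collapses to $F_{n+2}=i(P_n)$ via Lemma~\ref{lem:paths}.

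The point you flag about the upper bound is not merely a subtlety to be waved away by convention --- it is essential, and without it the statement as printed is false. Take $n=2$, $k_1=0$, $k_2=5$: then $T\cong K_{1,6}$, so $i(T)=2^6+1=65$, while the claimed upper bound is $F_3+2^{7-3}F_4=2+48=50$. Here $\mathrm{diam}(T)=2$, not $n+1=3$, and applying Theorem~\ref{thm:upper} with the true diameter gives $F_2+2^{5}F_3=65$, which is tight for stars. One cannot rescue the argument by using the theorem at the true diameter $d'<n+1$ and then weakening, because $g(d)=F_d+2^{N-d}F_{d+1}$ satisfies $g(d+1)-g(d)=F_{d-1}\left(1-2^{N-d-1}\right)\le 0$, i.e.\ the bound \emph{grows} as the diameter shrinks. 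So the hypothesis $k_1\ge 1$ and $k_n\ge 1$ (which your spine convention supplies, and which forces $\mathrm{diam}(T)=n+1$) must be assumed for the upper bound; interior $k_i$ may still vanish, so $\ell=0$ remains possible and your two-case treatment of the lower bound is still needed. With that hypothesis made explicit, your proof is correct.
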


Before moving on, we would like to point out an application of Proposition~\ref{prop:caterpillar}. Recall from combinatorics that given an alphabet $S$, a word of length $n$ is any string of characters taken from the alphabet $S$. In what follows, we consider $11$-avoiding words taken from the alphabet $\{0, \dots, 2^k\}$ whereby we mean if $1$ is in the $j^{th}$ position of the word, then $1$ is not in the $(j-1)^{st}$ or $(j+1)^{st}$ position of the word.

\begin{proposition}
	Let G be the caterpillar obtained from the path $P_n= x_1x_2\cdots x_n$ by appending exactly $k\ge 0$ leaves to each vertex of $P_n$. Then every independent set $I$ in $G$ corresponds uniquely to a $11$-avoiding word of length $n$ on the alphabet $\{0,\dots, 2^k\}$.
\end{proposition}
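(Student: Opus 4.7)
The plan is to construct an explicit bijection between independent sets of $G$ and $11$-avoiding words of length $n$ on the alphabet $\{0, 1, \ldots, 2^k\}$. For each spine vertex $x_j$, I will use the $j$-th letter of the word to record exactly which members of $\{x_j\} \cup L_j$ lie in the independent set, where $L_j$ denotes the set of $k$ leaves attached to $x_j$. The key combinatorial observation is that the restriction of any independent set $I$ to $\{x_j\} \cup L_j$ has exactly $2^k + 1$ possible shapes: either $x_j \in I$, which forces $I \cap L_j = \emptyset$ (one possibility), or $x_j \notin I$, in which case $I \cap L_j$ can be any of the $2^k$ subsets of $L_j$. Since the alphabet $\{0, 1, \ldots, 2^k\}$ has exactly $2^k + 1$ symbols, we have the right count at each position.

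Concretely, I would fix a bijection $\phi$ between subsets of $L_j$ and $\{0, 2, 3, \ldots, 2^k\}$ (the ordering is irrelevant), and define the encoding map $\Phi$ by
\[
\Phi(I)_j = \begin{cases} 1 & \text{if } x_j \in I, \\ \phi(I \cap L_j) & \text{if } x_j \notin I.\end{cases}
\]
The inverse map is the obvious decoding: if the $j$-th letter is $1$, put $x_j$ in $I$ and no leaves from $L_j$; otherwise leave $x_j$ out and put exactly the leaves in $\phi^{-1}(\text{letter})$ into $I$.

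What remains is to verify the two compatibility conditions. First, $\Phi(I)$ is $11$-avoiding: if letters $j$ and $j+1$ were both $1$, then $x_j, x_{j+1} \in I$, contradicting the fact that $x_j x_{j+1}$ is an edge of the spine. Second, every $11$-avoiding word decodes to an independent set: the only possible violations are a spine vertex in $I$ sharing an edge with a leaf of its in $I$ (impossible since $x_j \in I$ is only recorded by the letter $1$, which forces $L_j \cap I = \emptyset$) or two adjacent spine vertices both in $I$ (impossible by $11$-avoidance). By construction the two maps are mutual inverses on a letter-by-letter basis.

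There is no real obstacle: the statement is essentially a refinement of the counting identity already implicit in Proposition~\ref{prop:caterpillar} (the terms $\binom{n+1-j}{j} 2^{(n-j)k}$ count $11$-avoiding words with exactly $j$ ones), repackaged as an explicit bijection. The only care needed is to align the cardinalities of the local configurations with the alphabet size, and this is exactly what the choice of $\{0, 1, \ldots, 2^k\}$ arranges.
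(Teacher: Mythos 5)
Your proposal is correct and follows essentially the same route as the paper: encode each spine vertex's local configuration (either $x_j \in I$, reserved for the letter $1$, or $x_j \notin I$ together with the subset $I \cap L_j$, encoded by one of the remaining $2^k$ letters) and check that $11$-avoidance is exactly the spine independence condition. If anything, your explicit choice of a bijection $\phi$ onto $\{0,2,\dots,2^k\}$ is slightly cleaner than the paper's enumeration, which reuses the label $1$ for both $x_i \in I$ and one of the nonempty leaf subsets.
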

\begin{proof}
	We construct a bijection between the objects. Suppose we have $G$ as above, and let $I$ be an independent set in $G$. Denote the leaves of each vertex $x_i$ on the spine of $G$ by $L_i=\{\ell_1,\dots,\ell_k\}$ and assign a label from our alphabet $\{0,\dots, 2^k\}$ to each $x_i$ according to the following rules. Let $\mathcal{P}([k])$ be the power set of the set $\{1, \dots, k\}$ and enumerate the sets of $\mathcal{P}([k])- \emptyset$ as $S_1, \dots, S_{2^k-1}$. Assign $x_i$ the label $0$ if $L_i \cap I = \emptyset$. Assign $x_i$ the label $1$ is $x_i \in I$. Finally, assign $x_i$ the label $j$ if $x_i \not\in I$ and the labels on the vertices of $L_i\cap I$ is exactly the same as the elements in $S_j \in \mathcal{P}([k])- \emptyset$. 
	Since $I$ is an independent set, it is clear that there will be no two adjacent vertices with the label $1$. Furthermore, each $I$ has a unique labeling, as the $2^k$ letters in our alphabet correspond with the $2^k$ ways to choose distinct subsets of $k$ leaves. So the vertex labels of $G$ represent a $11$-avoiding word of length $n$ on the alphabet $\{0,...,2^k\}$. One can easily verify that this mapping is also onto. 
	\end{proof}
	\begin{corollary}
		The number of \emph{11}-avoiding words of length $n$ on the alphabet $\{0,\dots,2^k\}$ is given by \[  \sum_{j=0}^{ \left\lceil\frac{n}{2}\right\rceil} {{n+1-j}\choose {j}} 2^{(n-j)k}.\]
	\end{corollary}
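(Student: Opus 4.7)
The plan is to combine the two immediately preceding results: Proposition~\ref{prop:caterpillar} counts the independent sets of the balanced caterpillar $G$ obtained from $P_n$ by attaching $k$ leaves to each spine vertex, and the proposition just above this corollary exhibits an explicit bijection between those independent sets and the $11$-avoiding words of length $n$ on the alphabet $\{0,\dots,2^k\}$. So the corollary is really just an equality of cardinalities forced by those two results.

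More concretely, first I would fix the same caterpillar $G$ as in the previous proposition. Let $A$ be the set of independent sets of $G$ and let $B$ be the set of $11$-avoiding words of length $n$ on $\{0,\dots,2^k\}$. The preceding proposition constructs a map $\varphi\colon A \to B$, sending an independent set $I$ to the word whose $i$th letter is $0$, $1$, or an index of a nonempty subset of $[k]$, according to how $I$ intersects $x_i$ and its leaves. That proposition shows $\varphi$ is a well-defined bijection, so $|A|=|B|$. Then I would apply Proposition~\ref{prop:caterpillar} to evaluate
\[
|A| \;=\; i(G) \;=\; \sum_{j=0}^{\lceil n/2\rceil}\binom{n+1-j}{j}\,2^{(n-j)k},
\]
and conclude that $|B|$ equals the same expression, which is exactly the claimed count.

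There is no real obstacle here; the only thing I would be careful about is matching the case $k=0$ (the spine alone, alphabet of size $2$) and the convention on $\lceil n/2\rceil$ in the summation bound, so that the cited formula in Proposition~\ref{prop:caterpillar} is applied verbatim. Assuming those conventions line up, the proof is a one-line invocation of the bijection followed by the closed-form count from Proposition~\ref{prop:caterpillar}.
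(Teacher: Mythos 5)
Your proposal is correct and is exactly the argument the paper intends: the corollary is stated as an immediate consequence of the bijection in the preceding proposition together with the count from Proposition~\ref{prop:caterpillar}, with no further proof given. Your caveat about the $k=0$ case is a fair observation (Proposition~\ref{prop:caterpillar} is stated for $k\ge 1$ while the bijection allows $k\ge 0$), but the formula reduces to $i(P_n)=F_{n+2}$ there and the conclusion still holds.
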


Lastly, we improve the upper bound on $i(T)$ for any nontrivial  tree $T$ given in Theorem~\ref{thm:upper}.

\begin{theorem} Let $T$ be any nontrivial tree of order $n$ that is not a caterpillar and let $P = x_1\cdots x_{d+1}$ be a path of length $d = \rm{diam}(T)$. If $q$ is the matching number of $T - \{x_2, \dots, x_d\}$ and $q \ge 3$, then 
\[i(T) \le 3^q2^{n-d-2q+1}F_{d+1}.\]
\end{theorem}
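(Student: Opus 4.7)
The plan is to split an arbitrary independent set $I$ of $T$ into its intersection with the interior spine vertices $Q := \{x_2, \ldots, x_d\}$ and its intersection with $V(T) \setminus Q$, and to bound each factor separately. Set $T' := T - Q$. Since $x_2, \ldots, x_d$ are consecutive along a path in the tree $T$, the induced subgraph on $Q$ is exactly the subpath $x_2 x_3 \cdots x_d \cong P_{d-1}$, so any independent set of $T$ restricted to $Q$ is one of the $i(P_{d-1}) = F_{d+1}$ independent sets of $P_{d-1}$ by Theorem~\ref{thm:P_nC_n}. Meanwhile $I \cap V(T')$ is an independent set of $T'$, and the map $I \mapsto (I \cap Q,\ I \cap V(T'))$ is injective (the two pieces recover $I$), so
\[ i(T) \;\leq\; i(P_{d-1}) \cdot i(T') \;=\; F_{d+1} \cdot i(T'). \]

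Next I would estimate $i(T')$ using a standard matching-based bound: if $H$ is a graph on $n_H$ vertices with matching number $q_H$, then $i(H) \leq 3^{q_H} \cdot 2^{\,n_H - 2q_H}$. To justify this, fix a maximum matching $M$ of $H$ and let $W$ be the set of unmatched vertices; by maximality of $M$ the set $W$ is independent, so every independent set $J$ of $H$ is specified by choosing $J \cap \{u,v\} \in \{\emptyset, \{u\}, \{v\}\}$ for each of the $q_H$ edges $uv \in M$, together with an arbitrary subset of $W$, yielding at most $3^{q_H} \cdot 2^{\,n_H - 2q_H}$ possibilities in total. Applied to $T'$, which has $n - d + 1$ vertices and matching number $q$, this gives
\[ i(T') \;\leq\; 3^{q} \cdot 2^{(n-d+1) - 2q} \;=\; 3^{q} \cdot 2^{\,n - d - 2q + 1}. \]

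Combining the two inequalities yields $i(T) \leq 3^q \, 2^{\,n-d-2q+1} F_{d+1}$, as claimed. I do not foresee a serious obstacle: the argument assembles two standard estimates, and the injectivity of the splitting map is immediate. The only delicate point is that the hypothesis $q \geq 3$ is not actually needed to derive the inequality itself; rather, it is there to guarantee $3^q \cdot 2^{\,1-2q} < 1$, which is exactly the condition making the new bound a strict improvement over Theorem~\ref{thm:upper} instead of a weakening of it.
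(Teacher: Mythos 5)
Your argument is correct and is essentially the same as the paper's: both decompose an independent set $I$ into $I\cap\{x_2,\dots,x_d\}$ (at most $i(P_{d-1})=F_{d+1}$ choices) and $I\cap V(T')$ (at most $3^q2^{n-d+1-2q}$ choices via the three states per matching edge and two states per unmatched vertex), and multiply. Your closing remark about $q\ge 3$ also matches the paper, which uses $3^q<2^{2q-1}$ only to show the bound improves on Theorem~\ref{thm:upper}.
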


\begin{proof} Let $M$ be a maximum matching in $T'=T-\{x_2, \dots, x_d\}$. Note that any independent set of $T$ can be written $I = I_1 \cup I_2$ where $I_1\subset \{x_2, \dots, x_d\}$ and $I_2 \subset V(T')$. For each edge $e = uv\in M$, either $u \in I_2$ and $v \not\in I_2$, $u\not\in I_2$ and $v \in I_2$, or $u\not\in I_2$ and $v \not\in I_2$. Moreover, for each $v\in T'$ not incident to an edge in $M$, either $v \in I_2$ or $v\not\in I_2$. Putting this altogether, the maximum number of possibilities for $I_2$ is  $3^q2^{n-(d-1)-2q}$ from which it follows that $i(T) \le 3^q2^{n-(d-1)-2q}F_{d+1}$. To see that this value is better than the bound provided in Theorem~\ref{thm:upper}, note that $3^q<2^{2q-1}$ when $q\ge 3$ meaning that $3^q2^{n-d-2q+1}F_{d+1} <2^{2q-1}2^{n-d-2q+1}F_{d+1} = 2^{n-d}F_{d+1}$.
\end{proof} 

\subsection{Constructibility}
As noted earlier, a positive integer $n$ is considered \emph{constructible on trees} if there exists a tree $T$ such that $i(T)=n$. It is known that certain integers are not constructible; for instance, $4, 6, 7, 10$, and $11$ are the first integers which are not constructible on trees. Linek \cite{L-1989} first posed the following problem.

\begin{problem}\cite{L-1989}\label{conj:constructible} Are there infinitely many numbers which are not the number of independent sets in a tree? 
\end{problem}

In \cite{W-2009}, Wagner showed that almost all trees have an even number of independent sets. Then in \cite{Law-2010}, Law proves the following conjecture posed by Wagner in \cite{W-2009}.
\begin{theorem}\cite{Law-2010} For all positive integers $m$ and $k$, there exists a tree $T$ such that 
\[i(T) \equiv k\pmod{m}.\]
\end{theorem}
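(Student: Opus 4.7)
The plan is to control $i(T) \pmod m$ by working in the algebra of \emph{rooted} trees under coalescence. For a rooted tree $(T,v)$, introduce the invariants $q(T,v) = i(T-v)$ and $r(T,v) = i(T-N[v])$; conditioning on whether $v$ lies in an independent set gives
\[i(T) = q(T,v) + r(T,v).\]
Two operations preserve this structure and are easy to track. First, attaching a new leaf $u$ at $v$ and re-rooting at $u$ yields a rooted tree whose pair becomes $(q+r,\, q)$; iterating this from $K_1$ (which has $(q,r)=(1,1)$) produces rooted paths $P_n$ with pair $(F_{n+1}, F_n)$. Second, coalescing $(T_1, v_1)$ with $(T_2, v_2)$ by identifying $v_1$ and $v_2$ into a single vertex $v$ yields a rooted tree $T$ whose pair equals $(q_1 q_2,\, r_1 r_2)$, because $T-v$ and $T-N[v]$ split as disjoint unions across the two sides.

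Starting from these primitives, I would build a wide family of rooted trees with controlled $(q,r)$ mod $m$. Coalescing $s$ rooted paths of various lengths at a single central vertex yields a spider whose pair is $\bigl(\prod_{j=1}^s F_{k_j + 1},\, \prod_{j=1}^s F_{k_j}\bigr)$, so its unrooted $i$-value is
\[i(T) \equiv \prod_{j=1}^{s} F_{k_j + 1} + \prod_{j=1}^{s} F_{k_j} \pmod m.\]
Including rooted stars $K_{1,\ell}$ in the coalescence (each contributing a pair $(2^\ell, 1)$) and performing further leaf attachments after coalescence (each step acting as the Fibonacci map $(a,b) \mapsto (a+b, a)$ on the pair) enlarges this family considerably.

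The crux of the proof is to show that, as $(T,v)$ ranges over all such constructions, the sum $q + r \pmod m$ exhausts $\mathbb{Z}/m\mathbb{Z}$. By the Chinese Remainder Theorem, it suffices to handle prime-power moduli $m = p^a$. Within each prime power, the set $S$ of achievable pairs $(q,r) \in (\mathbb{Z}/p^a\mathbb{Z})^2$ is closed under coordinate-wise multiplication and under the Fibonacci step, and contains $(F_{n+1}, F_n)$ for all $n$ together with $(2^\ell, 1)$ for all $\ell$. Using the Pisano period of Fibonacci modulo $p^a$ and the multiplicative structure of powers of $2$ (and of the $q$-values of other small rooted trees), one argues that the projection $(q,r) \mapsto q+r$ from $S$ to $\mathbb{Z}/p^a\mathbb{Z}$ is surjective.

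The main obstacle is precisely this surjectivity claim, especially for residues lying in $p\mathbb{Z}/p^a\mathbb{Z}$, since coordinate-wise multiplication tends to concentrate on units and leave non-units underrepresented. A useful auxiliary construction is a rooted tree $T^*$ with $r(T^*, v^*) \equiv 0 \pmod{p^a}$: coalescing any $(T,v)$ with $T^*$ at $v^*$ then produces a tree whose $i$-value reduces to $q(T^*, v^*)\cdot q(T, v) \pmod{p^a}$, isolating a clean multiplicative contribution. Combining this reduction with the dense set of $q$-values coming from spiders, stars, and Fibonacci pumping should close any remaining gaps and complete the proof.
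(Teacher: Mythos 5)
This theorem is stated in the paper only as a cited result of Law \cite{Law-2010}; the paper gives no proof of it, so there is nothing internal to compare your argument against. Judged on its own, your proposal sets up the standard and correct machinery --- the pair $(i(T-v),\, i(T-N[v]))$, the leaf-attachment map $(q,r)\mapsto(q+r,q)$, and coalescence acting coordinatewise multiplicatively --- which is exactly the rooting/merging framework the paper itself describes in its constructibility section. But the write-up stops short of a proof at precisely the point where the theorem lives.

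Two gaps are genuine. First, the reduction ``by CRT it suffices to handle prime-power moduli'' is not justified: finding, for each prime power $p_i^{a_i}\mid m$, some tree $T_i$ with $i(T_i)\equiv k \pmod{p_i^{a_i}}$ does not produce a single tree $T$ with $i(T)\equiv k\pmod m$. You would need a simultaneous-realization statement --- that the set of achievable pairs modulo $m$ is the full product of the sets achievable modulo each prime power --- and that is itself a nontrivial claim requiring proof. Second, and more seriously, the surjectivity of $(q,r)\mapsto q+r$ onto $\mathbb{Z}/p^a\mathbb{Z}$ is the entire content of the theorem, and you leave it at ``one argues'' and ``should close any remaining gaps.'' The difficulty you correctly identify --- that products of Fibonacci numbers and powers of $2$ concentrate on units, so residues in $p\mathbb{Z}/p^a\mathbb{Z}$ are the hard case --- is not resolved by the auxiliary tree $T^*$ with $r(T^*,v^*)\equiv 0\pmod{p^a}$: you neither construct such a $T^*$ (note that $i(T-N[v])$ is the independence count of a forest, so producing one divisible by $p^a$ is already close in difficulty to the statement being proven) nor show that the resulting set of products $q(T^*,v^*)\cdot q(T,v)$ covers all residues. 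As it stands the proposal is a plausible plan with the central number-theoretic lemma missing, not a proof.
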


We believe that the answer to Problem~\ref{conj:constructible} is yes and provide an algorithm that provides evidence for this. First, we note that in \cite{Law-2010}, this problem is posed as a conjecture, attributed to Wagner, who attributes it to Li, Li, and Wang \cite{LLW-2003}. It is posed as follows.

\begin{conjecture}\cite{LLW-2003} There are   only finitely many positive integers that are not constructible.
\end{conjecture}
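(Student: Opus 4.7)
To prove the conjecture, one must exhibit, for every sufficiently large $n$, a tree $T$ with $i(T) = n$. The central lever is the leaf-attachment identity: if $T^+_v$ denotes the tree obtained from $T$ by attaching a single new leaf at a vertex $v$, then splitting on whether $v$ lies in the independent set gives
\[i(T^+_v) \;=\; i(T) + i(T - v).\]
Consequently, if $a$ is constructible via $T$ and $b = i(T - v)$ for some $v \in V(T)$, then $a + b$ is constructible. The task reduces to showing that the sum set of all reachable pairs $\{(i(T), i(T-v)) : T \text{ a tree},\, v \in V(T)\}$ eventually covers every positive integer.

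My first step would be to assemble a rich base library of trees whose independence counts are known in closed form: paths and stars from Theorem~\ref{thm:trees}, balanced caterpillars from Proposition~\ref{prop:caterpillar}, and spiders obtained by identifying path endpoints at a common root. For each such base $T$ I would catalogue the multiset $\{i(T - v) : v \in V(T)\}$ of admissible increments. I would then set up an induction: assume every integer in $[N_0, N]$ outside a finite exceptional set is constructible, and show that for every $m \in (N, N + \Delta]$ one can write $m = a + b$ with $a$ constructible in $[N_0, N]$ and $b$ an admissible increment attached to the witnessing tree for $a$. For the induction to close, $\Delta$ must grow at least linearly in $N$, which forces a careful study of how densely the increment sets refine as the base library is enlarged.

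The main obstacle, and the reason I expect a direct proof along these lines to fail, is that the admissible increments $i(T - v)$ are themselves independence counts of forests and therefore inherit the very sparsity one is trying to overcome; the smallest useful increments grow with $N$, leaving entire residue classes uncovered at each scale. This is precisely the empirical obstruction that the authors flag and, indeed, why they take the opposite position on Problem~\ref{conj:constructible}. A realistic variant of the plan is hybrid: verify all $n \le N$ directly by computer (folding the known exceptions $4,6,7,10,11,\ldots$ into a finite exception list), and then attempt a covering lemma stating that some parameterized family, such as the caterpillars governed by Proposition~\ref{prop:caterpillar} with flexible spine length and per-vertex leaf counts $k_i$, realizes every integer in an arithmetic progression of bounded modulus beyond $N$. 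If even one such covering lemma could be proved, the conjecture would follow; in light of the evidence collected in Section~\ref{sec:trees}, however, I would expect the genuinely productive direction to be the dual one of showing that the exception list is infinite.
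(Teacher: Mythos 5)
There is no proof to compare against: the statement you were given is a \emph{conjecture} (attributed to Li, Li, and Wang), and the paper does not prove it. On the contrary, the authors explicitly state that they believe the conjecture is \emph{false}, and the content of Section~\ref{sec:trees} is an algorithm producing empirical evidence for the opposite conclusion --- namely that the fraction of non-constructible integers in successive blocks of natural numbers does not tend to zero (it never drops below roughly $30\%$ in their experiments), supporting a ``yes'' answer to Problem~\ref{conj:constructible}. So the honest assessment is that you have been asked to prove a statement that is open and that the paper's authors expect to be false.

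Your proposal, judged on its own terms, is not a proof but a program, and the gap is exactly where you locate it: everything hinges on an unproved covering lemma asserting that the sum set of pairs $\bigl(i(T),\, i(T-v)\bigr)$ eventually hits every large integer, or at least every integer in some arithmetic progression of bounded modulus. Nothing in your outline establishes this, and your own analysis of why it should fail --- the admissible increments $i(T-v)$ are themselves independence counts of forests and so inherit the sparsity you are trying to defeat --- is consistent with the paper's computational findings. One point worth crediting: your leaf-attachment identity $i(T^+_v) = i(T) + i(T-v)$ is correct and is essentially the same decomposition the paper exploits in its proposition showing that any constructible $n$ not of the form $2^r+1$ satisfies $n = 2^t a + b$ with $a = i(T')$ and $b = i(T'-x)$; the paper uses this to \emph{constrain} the constructible integers rather than to cover them, which is the direction you yourself identify as the productive one. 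As written, though, the proposal does not prove the conjecture, and no argument in the paper does either.
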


We believe the above conjecture is false. For a tree $T$ rooted at $r$, we associate an ordered pair $(a,b)$ where $a$ denotes the number of independent sets containing $r$ and $b$ denotes the number of independent sets not containing $r$. It is clear that $i(T) = a+b$. We now present an algorithmic approach to generate every number that is constructible on trees. First, we introduce the work of Czabarka, Székely, and Wagner \cite{CSW-2018}, which shows that any rooted tree can be generated recursively using two  operations: rooting and merging. Performing the root operation means that a new vertex is added to the tree and connected to the previous root of the tree such that the new vertex becomes the root of the new tree. If a rooted tree's independence index is denoted as the ordered pair $(a,b)$, after rooting, the new tree's independence index is $(b, a+b)$. Merging two trees means that the roots of the trees are overlaid so that the two original trees descend from the same root vertex in the merged tree. If the initial trees have independence indexes $(a_1, b_1)$ and $(a_2, b_2)$, then the merged tree has independence index $(a_1a_2, b_1b_2)$. 

Using these two operations -- rooting and merging with $P_n$ -- we can build any tree. Thus, by starting with $P_2$ and continuously performing rooting and merging operations with all values of $n$, we can theoretically generate all constructible numbers. Although time and space constraints make this impossible to actually achieve in finite run time, the algorithm to do so is below.

\begin{algorithm}[]
\caption{Generate the Tree-Constructible Numbers}
\emph{Input:} Number of levels of the triangle to generate, number of merging operations with the Fibonacci numbers to compute. \\
\emph{Output:} All constructible numbers formed from $l$ levels of the triangle and $n$ Fibonacci merge operations. \\

\textbf{Node class for ordered pairs.}
\begin{quote}
    Initialize a Node $(a, b)$ with reference to its parent Node. Base Node (1,2) has a Null parent Node. \\
    Rooting method: add new vertex at root. 
    \begin{quote}
    	$(a, b) \rightarrow (b, a+b)$
    \end{quote}
    Merging method: iterate through merges with $n$ Fibonaccis.
    \begin{quote}
    	$(a,b) \rightarrow (F_n(a), F_{n+1}(b))$
    \end{quote}
\end{quote}

\textbf{Triangle construction.}
\begin{quote}
    Generate new levels of the triangle by performing rooting and merging on every Node of the current level. 
    \begin{quote}
    	For Node i = $(a_i, b_i)$ in current Level:
    	\begin{quote}
		Add rooted Node $(b_i, a_i+b_i)$ to the next Level. \\
		Add merged Nodes $(F_n(a_i), F_{n+1}(b_i))$ to the next Level.
	\end{quote}
    \end{quote}
    Construct a list of Node sums (constructible numbers) by summing $a+b$ for all Nodes and remove duplicates    
\end{quote}

\textbf{Constructing a tree with a given number of independent sets}
\begin{quote}
    Identify if the inputted number is constructible and find the Node that constructs it by searching the sum $a+b$ for all Nodes in the triangle. Identify that Node as current Node. 
    \begin{quote}
    	While (current Node's parent Node is not Null):
	\begin{quote}
		print(current Node)\\
		current Node = current Node's parent Node
	\end{quote}
    \end{quote}
\end{quote}
\end{algorithm}
\begin{algorithm}
\textbf{Calculating the fraction of non-constructible numbers in a given partition of natural numbers.}
\begin{quote}
    With an inputted partition width $n$, construct partitions: $[1, n], [n+1, 2n], [2n+1, 3n],\dots$ \\
    For Constructible $c$ in Constructible Numbers List:
    \begin{quote}
    	Sort the Constructible number into the correct partition by checking if it is bigger than $n, 2n, 3n, \dots$ 
    \end{quote}
    Calculate partition fractions by dividing the number of constructibles placed into a given partition by $n$. \\
    Non-Constructible fraction: $1$ - Constructible Fraction.
\end{quote}

\end{algorithm}  

The final method in the algorithm that calculates the fraction of non-constructible numbers in a given partition of natural numbers outputs a sequence of the fractions. If the non-constructible numbers were finite, we would expect to see a decreasing sequence of fractions. However, regardless of chosen partition width, we see a highly non-monotonic sequence. Furthermore, the percentage of non-constructible numbers in a given sequence is never less than 30$\%$ regardless of the partition width.
%\begin{definition}
%$n \in \mathbb{N}$ is constructible if there exists some tree T such that $i(T)=n$.
%\end{definition}

It should be noted that it was shown in \cite{L-1989} that all positive integers are constructible on bipartite graphs. Next, we focus on which positive integers are constructible on forests. That is, for any $n \in \mathbb{N}$, is there a forest $F$ such that $i(F) = n$? Note that although $4$ is not constructible on trees, $i(2K_2) = 2i(K_2) = 4$. Therefore, we also wonder if there are an infinite number of integers that are not constructible on forests. In order to prove that the set of integers which is not constructible on forests is infinite, we would like to better understand when an integer is constructible on either a tree or a forest. To do so, we, as above, look at iteratively constructing trees by appending stars. 

Note first that any tree $T\ne K_{1,n}$ can be obtained from a tree $T'$ and a star $K_{1, n}$ with center vertex $s$ by adding an edge between $s$ and some vertex $x$ in $T'$. Indeed, let $T$ be any tree other than a star and root $T$ at some leaf $\ell$. Define $A_0= \{\ell\}$, $A_i = \{v \in V(T): d(v, \ell)=i\}$ and $t = \max_{v\in T-\ell}d(v, \ell)$. Necessarily, $t\ge 3$ for $T$ is not a star. For any $p \in A_t$, $p$ is necessarily a leaf and its support $s$ is in $A_{t-1}$. It follows that  $s$ has exactly one non-leaf neighbor in $A_{t-2}$, call it $x$ and $T$ is obtained from $T' = T-(N[s]-x)$ by appending the star $K_{1, r}$ with center $s$ where $r = \deg(s)-1$ to $T'$ by adding an edge between $s$ and $x$. 

As in \cite{Law-2010}, given a tree $T$ rooted at $r$, we let $i_0(T, r)$ denote the number of independent sets not including $r$. Further, if we let $r_1, r_2, \dots, r_j$ represent the neighbors of $r$ in $T$, we let $(T_k, r_k)$ be the subtree of $T$ rooted at $r_k$. As shown in \cite{LLW-2003, W-2009}, 
\[i(T) = \prod_{k=1}^ji(T_k) + \prod_{k=1}^ji_0(T_k, r_k).\]

\begin{proposition} If $n\in \mathbb{N}$ is constructible on a tree, then either $n = 2^r+1$ for some $r\in \mathbb{N}$ or we can write $n = 2^ra + b$ where $a$ and $b$ are constructible on forests such that $b \le a \le 2b$ and there exists some tree $T$ with vertex $x$ such that $a=i(T)$ and $b=i(T-x)$.
\end{proposition}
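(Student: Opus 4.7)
The plan is to use the pendant-star decomposition set up in the paragraph immediately preceding the proposition. Given that $n$ is constructible on a tree, fix any tree $\widehat{T}$ with $i(\widehat{T}) = n$. I would dichotomize on whether $\widehat{T}$ is a star: if $\widehat{T} \cong K_{1,r}$, then counting its independent sets directly (either include the center, forcing no leaves, or exclude the center and freely pick any of the $2^r$ subsets of leaves) gives $n = 2^r + 1$, placing us in the first alternative.

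Otherwise, the construction from the preceding paragraph lets me write $\widehat{T}$ as a tree $T$ together with a pendant star $K_{1,r}$ centered at $s$ with leaves $\ell_1, \dots, \ell_r$, joined to $T$ by a single edge $sx$ for some $x \in V(T)$. I would set $a := i(T)$ and $b := i(T - x)$. Then I would compute $n = i(\widehat{T})$ by partitioning the independent sets $I$ of $\widehat{T}$ based on whether $s \in I$. When $s \notin I$, each $\ell_i$ is unconstrained and $I \cap V(T)$ is an arbitrary independent set of $T$, contributing $2^r a$ possibilities; when $s \in I$, all leaves $\ell_i$ are excluded and $x$ is excluded, while $I \cap V(T)$ ranges over independent sets of $T$ avoiding $x$, contributing $b$ possibilities. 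This yields the identity $n = 2^r a + b$. Since $T$ is itself a tree and $T - x$ is a forest (removing a vertex from a tree produces a forest), both $a$ and $b$ are automatically constructible on forests, and we have the required tree $T$ and vertex $x$ realizing $a = i(T)$ and $b = i(T-x)$.

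The remaining task is to verify the sandwich $b \le a \le 2b$. Applying the standard vertex-deletion identity $i(G) = i(G-v) + i(G - N[v])$ to $T$ at the vertex $x$ gives $a = b + i(T - N[x])$, which immediately yields $a \ge b$. For the upper bound, I would invoke the monotonicity $i(G - v) \le i(G)$, which holds because the independent sets of $G - v$ are precisely those of $G$ that omit $v$; iterating this observation to delete the neighbors of $x$ in $T$ one at a time shows $i(T - N[x]) \le i(T - x) = b$, and therefore $a \le 2b$. I do not anticipate any serious obstacle here, since the entire argument is a clean application of the pendant-star decomposition already developed in the paper, combined with the elementary vertex-deletion recurrence; the only mild subtlety is keeping careful track of which leaves and which vertex $x$ are forced out in each branch of the partition.
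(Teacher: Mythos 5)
Your proposal is correct and follows essentially the same route as the paper: the pendant-star decomposition from the preceding paragraph, followed by the identity $n = 2^r i(T) + i(T-x)$ (you obtain it by conditioning on whether the star's center $s$ lies in the independent set, while the paper roots at $x$ and uses the product formula, but the computation is the same). You additionally verify the sandwich $b \le a \le 2b$ via $a = b + i(T - N[x])$ and the monotonicity $i(T-N[x]) \le i(T-x)$, a step the paper's proof silently omits, so your write-up is in fact slightly more complete.
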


\begin{proof} Suppose $n\in \mathbb{N}$ is constructible and $T$ is such that $i(T)=n$. If $T$ is a star, then $n=2^r+1$ for some $r \in \mathbb{N}$. Thus, we shall assume $T$ is not a star. As noted above, $T$ is obtained from a tree $T'$ and a star $K_{1, t}$ for some $t\in \mathbb{N}$ with center vertex $s$ by adding an between between $s$ and some $x \in V(T')$. Root $T$ as $x$ and label the neighbors of $x$ as $r_1, \dots, r_j, s$ where $r_i \in T'$ for $i\in [j]$. It follows that 
\begin{eqnarray*}
i(T) &=& i(K_{1, t}, s)\prod_{k=1}^ji(T_k, r_k) + i_0(K_{1, t}, s)\prod_{k=1}^ji_0(T_k, r_k)\\
&=&(2^t+1)i(T'-x) + 2^ti(T'-N[x])\\
&=& 2^t(i(T'-x)+i(T'-N[x]) + i(T'-x)\\
&=&2^ti(T') + i(T'-x).
\end{eqnarray*}
Thus, $n=i(T) = 2^ta+b$ where $a$ and $b$ are constructible on forests and $a = i(T')$ and $b = i(T'-x)$. 
\end{proof}

Note: This shows us the following identity for the Fibonacci numbers. Indeed, we know $i(P_n) = F_{n+2} = F_{n+1} + F_n = 2F_n + F_{n-1} = 2i(P_{n-2}) + i(P_{n-3})$.

\section{Benzenoids}\label{sec:benz}
In this section, we study specific types of benzenoids, which are  $2$-connected graphs in which all inner faces are hexagons (and all hexagons are faces). Recall that in a graph $G$, a set $S\subseteq V(G)$ is a vertex cover if every edge in $G$ is incident to some vertex in $S$. It is known that that there is a natural bijection between the set of all vertex covers in a graph and the set of all independent sets in a graph as for any vertex cover $S$ in $G$, $V(G) - S$ is an independent set. We use this relationship to study $i(G)$ when $G$ is a benzenoid.

\begin{theorem}\label{thm:towerhex} If $G$ is  a linear chain of $n\ge 1$ hexagons as shown in Figure \ref{fig:towerhex}, the number of vertex covers $a_n$ is defined recursively as $a_1=18$, $b_1=5$, 
\[a_n = 8a_{n-1}-6b_{n-1}\hskip15mm \text{for }n\ge 2\]
where 
\[b_n = 2a_{n-1} - b_{n-1}\hskip15mm \text{for } n\ge2,\]
and  $b_n$ represents the number of vertex covers containing  exactly one vertex in $\{w, x\}$.  
\end{theorem}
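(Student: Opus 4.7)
My plan is to induct on $n$, proving both recurrences simultaneously. For the base case $n=1$, the chain is $C_6$, so $a_1 = i(C_6) = L_6 = 18$ by Theorem~\ref{thm:P_nC_n} together with the bijection between vertex covers and independent sets via complementation. From Figure~\ref{fig:towerhex}, $\{w,x\}$ is the edge of the hexagon on the side where a next hexagon would attach. By the top-bottom symmetry of the chain, vertex covers with $w \in S$ and $x \notin S$ are in bijection with those satisfying the reverse, so I read $b_n$ as this common one-sided count (the stated ``exactly one'' being twice this value). Then $b_1 = 5$ follows: fixing $w \in S$ and $x \notin S$ forces the other neighbor of $x$ into $S$, reducing the remaining choice to a vertex cover of a $P_3$, of which there are $i(P_3) = 5$.

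For the inductive step, build $L_n$ by gluing a new hexagon $H_n$ to $L_{n-1}$ along the terminal edge $\{w', x'\}$ of $L_{n-1}$, and label the four new vertices cyclically as $w', p_1, p_2, p_3, p_4, x'$, so that the new terminal edge is $\{w, x\} = \{p_2, p_3\}$. Any vertex cover $S$ of $L_n$ restricts to a vertex cover $S' = S \cap V(L_{n-1})$ of $L_{n-1}$; by the inductive hypothesis and the symmetry above, $S'$ contains both $w'$ and $x'$ in $a_{n-1} - 2 b_{n-1}$ cases, contains only $w'$ in $b_{n-1}$ cases, and contains only $x'$ in $b_{n-1}$ cases (none can exclude both, since $w'x'$ must be covered).

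To prove the $a_n$ recurrence, I count extensions to $\{p_1,p_2,p_3,p_4\}$ for each class of $S'$. If both $w'$ and $x'$ lie in $S'$, the edges $w' p_1$ and $p_4 x'$ are already covered and the remaining constraints reduce to a vertex cover of $P_4$ on $\{p_1, p_2, p_3, p_4\}$, giving $i(P_4) = 8$ extensions. If exactly one of $w', x'$ is in $S'$, say $w'$, then $p_4 x'$ forces $p_4 \in S$ and the remaining constraints form a vertex cover of $P_3$ on $\{p_1, p_2, p_3\}$, giving $i(P_3) = 5$ extensions (and symmetrically for the other side). Summing yields
\[
a_n = 8(a_{n-1} - 2 b_{n-1}) + 5 \cdot 2 b_{n-1} = 8 a_{n-1} - 6 b_{n-1}.
\]

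To prove the $b_n$ recurrence, I count vertex covers of $L_n$ with $p_2 \in S$ and $p_3 \notin S$. This covers $p_1 p_2$ and $p_2 p_3$, and forces $p_4 \in S$ to cover $p_3 p_4$; the single remaining condition is that $w' p_1$ be covered. Either $p_1 \in S$, in which case $S'$ is unconstrained ($a_{n-1}$ options), or $p_1 \notin S$ and $w' \in S'$ is forced, giving $(a_{n-1} - 2 b_{n-1}) + b_{n-1} = a_{n-1} - b_{n-1}$ options. Adding these contributions gives $b_n = 2 a_{n-1} - b_{n-1}$. The main technical obstacle is settling the one-sided interpretation of $b_n$ (forced by the numerics of the base case) and being careful about which of $p_1, \ldots, p_4$ are forced into $S$ in each case; once the bookkeeping is in place, all sub-counts reduce to $i(P_3) = 5$ or $i(P_4) = 8$ from Theorem~\ref{thm:P_nC_n}.
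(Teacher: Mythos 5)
Your proof is correct and follows essentially the same route as the paper's: induct on the number of hexagons, split vertex covers according to which endpoints of the shared edge they contain, and reduce the extension counts to $i(P_3)=5$ and $i(P_4)=8$. Your explicit resolution of the ``exactly one vertex in $\{w,x\}$'' wording as the one-sided count (forced by $b_1=5$ and justified by the reflection symmetry) is in fact a point where you are more careful than the paper, which uses that reading implicitly.
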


\begin{figure}[h!]
\begin{center}
\begin{tikzpicture}[auto]
    % Place nodes

	\vertex (0) at (0,0) [label=left:$w$,scale=.75pt]{};
	\vertex (1) at (0,1) [label=left:$x$,scale=.75pt]{};
	\vertex (2) at (.75, 1.5) [label=above:$y$,scale=.75pt]{};
	\vertex (3) at (1.5, 1) [label=above:$z$,scale=.75pt]{};
	\vertex (4) at (1.5, 0) [label=below:$u$,scale=.75pt]{};
	\vertex (5) at (.75, -.5) [label=below:$v$,scale=.75pt]{};
	\vertex (6) at (2.25, 1.5) [label=above:$$,scale=.75pt]{};
	\vertex (7) at (3, 1) [label=above:$$,scale=.75pt]{};
	\vertex (8) at (3, 0) [label=above:$$,scale=.75pt]{};
	\vertex (9) at (2.25, -.5) [label=above:$$,scale=.75pt]{};
	\vertex (10) at (5, 0) [label=above:$$,scale=.75pt]{};
	\vertex (11) at (5, 1) [label=above:$$,scale=.75pt]{};
	\vertex (12) at (5.75, 1.5) [label=above:$$,scale=.75pt]{};
	\vertex (13) at (6.5, 1) [label=above:$$,scale=.75pt]{};
	\vertex (14) at (6.5, 0) [label=above:$$,scale=.75pt]{};
	\vertex (15) at (5.75, -.5) [label=above:$$,scale=.75pt]{};
	\draw[dashed] (3.05,1) -- (3.75, 1.5);
	\draw[dashed] (3.05, 0) -- (3.75, -.5);
	\draw[dashed] (4.95, 1) -- (4.25, 1.5);
	\draw[dashed] (4.95, 0) -- (4.25, -.5);
	
	\path
		(0) edge (1) 
		(1) edge (2)
		(2) edge (3)
		(3) edge (4)
		(4) edge (5)
		(0) edge (5)
		(3) edge (6)
		(6) edge (7)
		(7) edge (8)
		(8) edge (9)
		(9) edge (4)
		(10) edge (11)
		(11) edge (12)
		(12) edge (13)
		(13) edge (14)
		(14) edge (15)
		(15) edge (10)
		;

\end{tikzpicture}
\end{center}
\caption{Linear chain of $n+1$ hexagons }
\label{fig:towerhex}
\end{figure}
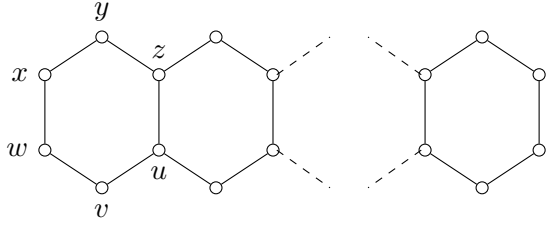

\begin{proof} Note that if $G$ is a single hexagon, then the number of vertex covers $a_1$ is defined to be $18$ as $i(C_6) = F_5+F_7 = 18$ by Theorem~\ref{thm:P_nC_n}. One can easily verify that $b_1=5$. Assume the statement of the theorem is true when $G$ is a linear chain of $n\ge1$ hexagons. Now let $G$ be a linear chain of $n+1$ hexagons  as shown in Figure  \ref{fig:towerhex} and label the vertices in the left most hexagon as shown.  To see why $a_{n+1}= 8a_n - 2b_n$, we need to consider three cases, two of which are equivalent. First, let $J$ be any vertex cover that contains $u$ and not $z$. It follows that $y \in J$ in order to cover the edge $yz$. Further, to cover $vw$ and $wx$, there are $i(P_3)=5$ ways to choose from $\{v, w, x\}$. Since $b_n$ counts the number of vertex covers that contain $u$ and not $z$, the total number of vertex covers in $G$ that contain $u$ and not $z$ is $5b_n$. The case where $J$ contains $z$ and not $u$ is analogous meaning that the number of vertex covers in $G$ containing exactly one of $u$ or $z$ is $10b_n$. Finally, suppose $u$ and $z$ are in $J$. It follows that there are $i(P_4) = 8$ ways to choose vertices from $\{v, w, x, y\}$ to cover the edges $vw, wx$, and $xy$. Therefore, the number of vertex covers in $G$ that contain both $u$ and $z$ is $8(a_n-2b_n)$, giving us a total of $a_{n+1} = 10b_n + 8(a_n-2b_n) = 8a_n - 6b_n$. 

Next, we show that the number of vertex covers in $G$ that contain $w$ and not $x$ is $b_{n+1} = 2a_n - b_n$. As above, we have three cases to consider. First, let $J$ be any vertex cover that contains $w$ and $u$, but not $x$ or $z$. Thus, $y \in J$ in order to cover $xy$ and either $v \in J$ or $v\not\in J$. Thus, there are $2b_n$ possible vertex covers of this type. Next, suppose $J$ contains $w$ and $z$, but not $x$ or $u$. Then both $y$ and $v$ are in $J$ and there are $b_n$ possible vertex covers of this type. Finally, suppose $J$ contains $w, u$, and $z$, but not $x$. Therefore, $y \in J$ to cover $xy$ and $J$ may or may not contain $v$ for a total of $2(a_n-2b_n)$ possible vertex covers of this type. Combining this altogether gives us $b_{n+1} = 3b_n + 2(a_n-2b_n) = 2a_n - b_n$. 

\end{proof}

\begin{corollary}
  Given a linear chain of $n\ge 1$ hexagons, the number of vertex covers is
  \[ a_n = \frac{1}{\sqrt{33}}  \left( \left( 51 + 9 \sqrt{33} \right) 
     \left( \frac{7 + \sqrt{33}}{2} \right)^{n - 1} - \left( 51 - 9 \sqrt{33}
     \right)  \left( \frac{7 - \sqrt{33}}{2} \right)^{n - 1} \right) . \]
     \end{corollary}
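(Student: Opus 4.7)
The plan is to reduce the two coupled recurrences for $a_n$ and $b_n$ to a single second-order linear recurrence in $a_n$ alone, and then solve it by the standard characteristic-polynomial method. From the defining relation $a_n = 8a_{n-1} - 6b_{n-1}$ I can solve for $b_{n-1} = (8a_{n-1} - a_n)/6$ and shift the index to obtain $b_n = (8a_n - a_{n+1})/6$. Substituting these two expressions into $b_n = 2a_{n-1} - b_{n-1}$ and clearing the factor of $6$ yields
\[
8a_n - a_{n+1} \;=\; 12a_{n-1} - (8a_{n-1} - a_n) \;=\; 4a_{n-1} + a_n,
\]
so that
\[
a_{n+1} = 7a_n - 4a_{n-1} \qquad (n\ge 2).
\]

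Next I would compute initial conditions. By hypothesis $a_1 = 18$ and $b_1 = 5$, and one application of the recurrences gives $a_2 = 8\cdot 18 - 6\cdot 5 = 114$. The characteristic equation of $a_{n+1} = 7a_n - 4a_{n-1}$ is $x^2 - 7x + 4 = 0$, whose roots are exactly $\lambda_\pm = (7\pm\sqrt{33})/2$, matching the bases appearing in the claimed formula. Hence there exist constants $A, B$ with
\[
a_n \;=\; A\,\lambda_+^{\,n-1} + B\,\lambda_-^{\,n-1}.
\]

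The only remaining step is to pin down $A$ and $B$ from $a_1 = 18$, $a_2 = 114$. This gives the linear system $A + B = 18$ and $A\lambda_+ + B\lambda_- = 114$. Subtracting $\lambda_-$ times the first equation from the second yields $A(\lambda_+ - \lambda_-) = 114 - 18\lambda_-$; since $\lambda_+ - \lambda_- = \sqrt{33}$, a direct calculation gives
\[
A \;=\; \frac{114 - 18\lambda_-}{\sqrt{33}} \;=\; \frac{51 + 9\sqrt{33}}{\sqrt{33}},
\qquad
B \;=\; -\,\frac{51 - 9\sqrt{33}}{\sqrt{33}},
\]
which, upon substitution, is precisely the closed form stated in the corollary.

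There is no real obstacle here beyond bookkeeping: the eigenvalues of the $2\times 2$ transition matrix $\bigl(\begin{smallmatrix}8 & -6\\ 2 & -1\end{smallmatrix}\bigr)$ come out cleanly as $(7\pm\sqrt{33})/2$, and the only thing that could go wrong is an arithmetic slip in matching the constants $A, B$ to the form $\pm(51\pm 9\sqrt{33})/\sqrt{33}$. I would double-check the constants by evaluating the closed form at $n=1$ and $n=2$ and confirming the values $18$ and $114$.
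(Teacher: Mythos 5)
Your proposal is correct, and I verified the arithmetic: the elimination gives $a_{n+1}=7a_n-4a_{n-1}$ for $n\ge 2$, the characteristic roots are $\lambda_\pm=(7\pm\sqrt{33})/2$, and the constants $A=(51+9\sqrt{33})/\sqrt{33}$, $B=-(51-9\sqrt{33})/\sqrt{33}$ reproduce $a_1=18$ and $a_2=114$. Your route differs from the paper's in mechanics though not in substance: the paper keeps the coupled system in vector form, writes $(a_{n+1},b_{n+1})^{T}=M^{n}(18,5)^{T}$ for the transition matrix $M=\bigl(\begin{smallmatrix}8 & -6\\ 2 & -1\end{smallmatrix}\bigr)$, and explicitly diagonalizes $M$, whereas you eliminate $b_n$ via $b_n=(8a_n-a_{n+1})/6$ to obtain a scalar second-order recurrence and then apply the standard characteristic-polynomial ansatz. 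The two are equivalent --- your characteristic polynomial $x^2-7x+4$ is exactly the characteristic polynomial of $M$ (trace $7$, determinant $4$) --- but your version is more elementary and less error-prone, since it avoids computing the eigenvector matrix and its inverse. What the paper's diagonalization buys in exchange is that it simultaneously yields the closed form for $b_n$ recorded in the Observation immediately following the corollary; with your approach that would take one extra (easy) step, reading $b_n$ off from $b_n=(8a_n-a_{n+1})/6$. One minor point of care: state explicitly that the scalar recurrence is only derived for $n\ge 2$, so the two initial conditions $a_1,a_2$ are exactly what is needed to pin down the solution.
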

  \begin{proof}
    Since $a_{n+1} = 8 a_n - 6 b_n$ and $b_{n+1} = 2 a_n - b_n$,
    we can represent these equations in the following matrix:
    \[ \left[\begin{array}{c}
         a_{n + 1}\\
         b_{n + 1}
       \end{array}\right] = \left[\begin{array}{cc}
         8 & - 6\\
         2 & - 1
       \end{array}\right]  \left[\begin{array}{c}
         a_n\\
         b_n
       \end{array}\right] . \]
    Since $a_1 = 18$ and $b_1 = 5$ we can see by induction that we can
    rewrite the matrix as follows:
    \[ \left[\begin{array}{c}
         a_{n + 1}\\
         b_{n + 1}
       \end{array}\right] = \left[\begin{array}{cc}
         8 & - 6\\
         2 & - 1
       \end{array}\right]^n  \left[\begin{array}{c}
         18\\
         5
       \end{array}\right] . \]
    Diagonlization gives us
    \[ \left[\begin{array}{c}
         a_{n + 1}\\
         b_{n + 1}
       \end{array}\right] = \left( \left[\begin{array}{cc}
         1 & 1\\
         \frac{4}{9 + \sqrt{33}} & \frac{4}{9 - \sqrt{33}}
       \end{array}\right]  \left[\begin{array}{cc}
         \frac{7 + \sqrt{33}}{2} & 0\\
         0 & \frac{7 - \sqrt{33}}{2}
       \end{array}\right]  \left( \frac{1}{\sqrt{33}}  \left[\begin{array}{cc}
         \frac{9 + \sqrt{33}}{2} & - 6\\
         \frac{- 9 + \sqrt{33}}{2} & 6
       \end{array}\right] \right) \right)^n  \left[\begin{array}{c}
         18\\
         5
       \end{array}\right] \]
    \[ = \left[\begin{array}{cc}
         1 & 1\\
         \frac{4}{9 + \sqrt{33}} & \frac{4}{9 - \sqrt{33}}
       \end{array}\right]  \left[\begin{array}{cc}
         \frac{7 + \sqrt{33}}{2} & 0\\
         0 & \frac{7 - \sqrt{33}}{2}
       \end{array}\right]^n  \left( \frac{1}{\sqrt{33}} 
       \left[\begin{array}{cc}
         \frac{9 + \sqrt{33}}{2} & - 6\\
         \frac{- 9 + \sqrt{33}}{2} & 6
       \end{array}\right] \right)  \left[\begin{array}{c}
         18\\
         5
       \end{array}\right] \]
    \[ = \left[\begin{array}{cc}
         1 & 1\\
         \frac{4}{9 + \sqrt{33}} & \frac{4}{9 - \sqrt{33}}
       \end{array}\right]  \left[\begin{array}{cc}
         \left( \frac{7 + \sqrt{33}}{2} \right)^n & 0\\
         0 & \left( \frac{7 - \sqrt{33}}{2} \right)^n
       \end{array}\right]  \left( \frac{1}{\sqrt{33}}  \left[\begin{array}{cc}
         \frac{9 + \sqrt{33}}{2} & - 6\\
         \frac{- 9 + \sqrt{33}}{2} & 6
       \end{array}\right] \right)  \left[\begin{array}{c}
         18\\
         5
       \end{array}\right] . \]
    Computing this gives us the following for $a_{n+1}$:
    \[ a_{n+1} = \frac{1}{\sqrt{33}}  \left( \left( 51 + 9 \sqrt{33} \right)
       \left( \frac{7 + \sqrt{33}}{2} \right)^n - \left( 51 - 9 \sqrt{33}
       \right)  \left( \frac{7 - \sqrt{33}}{2} \right)^n \right) . \]
    This then gives us the desired expression for $a_n$:
    \[ a_n = \frac{1}{\sqrt{33}}  \left( \left( 51 + 9 \sqrt{33} \right) 
       \left( \frac{7 + \sqrt{33}}{2} \right)^{n - 1} - \left( 51 - 9
       \sqrt{33} \right)  \left( \frac{7 - \sqrt{33}}{2} \right)^{n - 1}
       \right) . \]
  \end{proof} 
  \vskip5mm
  \begin{observation}
  	The above corollary gives us the following expression for $b_n$: \[b_n=\frac{1}{\sqrt{33}}(2^{-1-2n}((-3+\sqrt{33})(14-2\sqrt{33})^n+(3+\sqrt{33})(14+2\sqrt{33})^n)).\]
  \end{observation}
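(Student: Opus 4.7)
The plan is to treat $\{b_n\}$ as a second-order linear recurrence and pin down its closed form from initial data. The matrix $M=\begin{bmatrix}8&-6\\2&-1\end{bmatrix}$ has characteristic polynomial $\lambda^{2}-7\lambda+4$, so by Cayley--Hamilton both sequences $\{a_n\}$ and $\{b_n\}$ satisfy the scalar recurrence $x_{n+1}=7x_n-4x_{n-1}$. Consequently $b_n=\alpha\lambda_+^{n}+\beta\lambda_-^{n}$ for some constants $\alpha,\beta$, where $\lambda_\pm=(7\pm\sqrt{33})/2$ are the eigenvalues already identified in the preceding corollary. The two coefficients are determined by $b_1=5$ (given in Theorem~\ref{thm:towerhex}) and $b_2=2a_1-b_1=31$ (using $a_1=18$ and the recurrence $b_{n+1}=2a_n-b_n$). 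Solving this $2\times 2$ linear system yields $\alpha=(3+\sqrt{33})/(2\sqrt{33})$ and $\beta=(-3+\sqrt{33})/(2\sqrt{33})$.

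The second step is to recast the result in the specific shape written in the observation. Using the identity $\lambda_\pm=(14\pm 2\sqrt{33})/4$, one obtains $\lambda_\pm^{n}=2^{-2n}(14\pm 2\sqrt{33})^{n}$, so the overall factor $\tfrac{1}{2\sqrt{33}}$ combines with $2^{-2n}$ to produce exactly the $2^{-1-2n}/\sqrt{33}$ prefactor appearing in the statement. After grouping, the formula becomes
\[
b_n=\frac{2^{-1-2n}}{\sqrt{33}}\bigl((-3+\sqrt{33})(14-2\sqrt{33})^{n}+(3+\sqrt{33})(14+2\sqrt{33})^{n}\bigr),
\]
which is the stated expression.

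The main obstacle is purely cosmetic: matching the slightly awkward form $2^{-1-2n}(14\pm 2\sqrt{33})^{n}$ rather than the more natural $\lambda_\pm^{n}$ requires careful bookkeeping of powers of $2$ and the signs of $\sqrt{33}$ in the coefficients. An equally valid but lengthier route would be to simply read off $b_{n+1}$ as the second coordinate of the matrix product $PD^{n}Q\begin{bmatrix}18\\5\end{bmatrix}$ already computed in the proof of the corollary, using the second row of $P$ in place of the first and rationalizing $4/(9\pm\sqrt{33})=(9\mp\sqrt{33})/12$ to keep the arithmetic manageable. As a final sanity check, I would confirm $b_1=5$ directly from the closed form and verify that the expression satisfies $b_{n+1}=2a_n-b_n$ using the formula for $a_n$ established in the corollary.
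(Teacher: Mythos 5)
Your derivation is correct, and I verified the endpoints: with $\lambda_{\pm}=(7\pm\sqrt{33})/2$ one has $(14\pm 2\sqrt{33})^{n}=2^{2n}\lambda_{\pm}^{n}$, so the stated formula is $b_n=\tfrac{1}{2\sqrt{33}}\bigl((\sqrt{33}-3)\lambda_{-}^{n}+(\sqrt{33}+3)\lambda_{+}^{n}\bigr)$, which indeed gives $b_1=5$ and $b_2=31=2a_1-b_1$, matching your $\alpha$ and $\beta$. Your route differs slightly from what the paper intends: the observation is stated with no proof of its own, the implicit argument being to read off the \emph{second} coordinate of the product $PD^{n}P^{-1}\left[\begin{smallmatrix}18\\5\end{smallmatrix}\right]$ already assembled in the corollary's proof (i.e., use the second row of the eigenvector matrix instead of the first). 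You instead invoke Cayley--Hamilton to reduce to the scalar recurrence $x_{n+1}=7x_n-4x_{n-1}$ and fit the two coefficients to the initial data $b_1,b_2$. Both are standard; yours is more self-contained and avoids rationalizing the entries $4/(9\pm\sqrt{33})$, while the paper's reading-off approach requires no new computation beyond the corollary. One small presentational gap: you assert the solution of the $2\times 2$ system for $\alpha,\beta$ without showing it, so in a written-up version you should either display that step or include the verification of $b_1$ and $b_2$ from the closed form, which suffices since a second-order linear recurrence is determined by two consecutive values.
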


  Before we study other benzenoid configurations, we point out a particular phenomenon. Note that the number of $2\times n$ matrices containing a $1$ in the top left entry where all entries are integer values and adjacent entries differ by at most $1$, which we denote as $f_n$, is considered on the Online Encyclopedia of Integer Sequences (OEIS) A138977. $f_n$ satisfies the recurrence relation $f_n = c_n + d_n$ where $c_1=2$, $d_1=1$, $c_{n+1} = 4c_n+d_n$, and $d_{n+1} = 2c_n + 3d_n$. The first values in this sequence are $3, 19, 121, 771, \dots $. Note that the first values in the sequence given by $a_n$ in Theorem~\ref{thm:towerhex} are $18, 114, 726, \dots$. One can easily verify that for $n \le 4$, 
  \[\frac{a_{n+1}}{a_n} = \frac{f_{n+1}}{f_n}.\]
  We believe the above ratio holds for all $n \in \mathbb{N}$ and we state this as a conjecture.
  
  \begin{conjecture} For all $n \in \mathbb{N}$, if $f_n$ counts the number of $2\times n$ matrices containing a $1$ in the top left entry where all entries are integer values and adjacent entries differ by at most $1$ and $a_n$ counts the number of independent sets in a linear chain of $n$ hexagons, then 
    \[\frac{a_{n+1}}{a_n} = \frac{f_{n+1}}{f_n}.\]
    \end{conjecture}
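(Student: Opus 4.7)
The plan is to prove the stronger identity $a_n = 6 f_n$ for every $n \ge 1$, which immediately implies the conjectured equality of ratios. The strategy is to show that both sequences satisfy the same second-order linear recurrence and have initial data in the ratio $6:1$.

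First, I would eliminate $b_n$ from the coupled system of Theorem~\ref{thm:towerhex}. Solving $a_{n+1} = 8 a_n - 6 b_n$ for $b_n$ and substituting into $b_{n+1} = 2 a_n - b_n$ yields, after routine algebra,
\[a_{n+2} = 7 a_{n+1} - 4 a_n,\]
with $a_1 = 18$ and $a_2 = 114$. The characteristic polynomial $\lambda^2 - 7 \lambda + 4$ matches the eigenvalues $(7 \pm \sqrt{33})/2$ appearing in the preceding corollary, providing a useful sanity check.

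Second, and this is the main step, I would show that $f_n$ satisfies the same recurrence $f_{n+2} = 7 f_{n+1} - 4 f_n$. The natural approach is a transfer-matrix argument on the $2 \times n$ integer matrices counted by $f_n$. Encode the state of column $i$ by its pair of entries reduced modulo the global translation that normalizes the top-left entry to $0$; then the ``adjacent entries differ by at most $1$'' constraints, both within a column and between consecutive columns, turn the count into a walk problem on a finite-state transition graph. The sequence $f_n$ is then a linear combination of $n$-th powers of the eigenvalues of the associated transfer matrix, and the task reduces to showing that the initial state vector lies in a $2$-dimensional invariant subspace whose minimal polynomial is exactly $\lambda^2 - 7\lambda + 4$. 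The tabulated values $f_1 = 3$, $f_2 = 19$, $f_3 = 121$, $f_4 = 771$ already verify $121 = 7 \cdot 19 - 4 \cdot 3$ and $771 = 7 \cdot 121 - 4 \cdot 19$, so once the recurrence is established abstractly its parameters are pinned down.

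Combining the two steps, the base cases $a_1 = 18 = 6 f_1$ and $a_2 = 114 = 6 f_2$ together with a one-line induction on the common recurrence yield $a_n = 6 f_n$ for all $n$, which proves the conjecture. The main obstacle is the transfer-matrix analysis for $f_n$: the naive state space of integer pairs is infinite, so one must carefully quotient by translation and then identify the small invariant subspace on which the action has characteristic polynomial exactly $\lambda^2 - 7 \lambda + 4$ rather than a nontrivial multiple of it. A more illuminating alternative would be to construct a direct bijection partitioning the independent sets of an $n$-hexagon chain into six copies of the matrices counted by $f_n$, giving a combinatorial explanation for the factor of $6$ rather than obtaining it by comparison of initial conditions.
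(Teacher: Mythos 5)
This statement is posed in the paper only as a conjecture --- the authors verify $a_{n+1}/a_n=f_{n+1}/f_n$ numerically for $n\le 4$ and give no proof --- so your proposal is not paralleling an argument in the paper but attempting to settle the question outright, and the strategy (show both sequences satisfy $x_{n+2}=7x_{n+1}-4x_n$ with initial data in ratio $6:1$, hence $a_n=6f_n$) is the right one. Your first step is correct and complete: eliminating $b_n$ from the system in Theorem~\ref{thm:towerhex} does give $a_{n+2}=7a_{n+1}-4a_n$ with $a_1=18$, $a_2=114$, consistent with the eigenvalues $(7\pm\sqrt{33})/2$. The gap is that your second step --- the recurrence for $f_n$ --- is the entire substance of the problem and is left as a plan; checking it on four tabulated terms does not establish it, as you acknowledge. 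The good news is that the transfer-matrix analysis you worry about is easier than your sketch suggests: take as the state of column $j$ the difference $e_j=t_j-b_j\in\{-1,0,1\}$ of its two entries (equivalently, the column up to translation). Passing to the next column means choosing $(\delta t,\delta b)\in\{-1,0,1\}^2$, and the number of such pairs realizing $e\mapsto e'$ depends only on $e'-e$ (namely $3,2,1$ for $|e'-e|=0,1,2$), so $f_n=\mathbf{1}^{T}M^{\,n-1}\mathbf{1}$ with
\[ M=\begin{pmatrix}3&2&1\\2&3&2\\1&2&3\end{pmatrix}, \]
the initial vector being all ones because the first column is $(1,b)$ with $b\in\{0,1,2\}$. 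The eigenvalues of $M$ are $2$, with antisymmetric eigenvector $(1,0,-1)$, and $(7\pm\sqrt{33})/2$, with symmetric eigenvectors; since $\mathbf{1}$ is orthogonal to $(1,0,-1)$, the eigenvalue $2$ never contributes, so $f_n$ lives in the two-dimensional subspace with minimal polynomial $\lambda^2-7\lambda+4$ and $f_{n+2}=7f_{n+1}-4f_n$ follows. With $f_1=3$, $f_2=19$ this yields $a_n=6f_n$ and proves the conjecture. One caution: the auxiliary recurrence the paper itself attaches to $f_n$ (namely $c_{n+1}=4c_n+d_n$, $d_{n+1}=2c_n+3d_n$, $f_n=c_n+d_n$ with $c_1=2$, $d_1=1$) has characteristic polynomial $\lambda^2-7\lambda+10$ and produces $f_2=16$, contradicting the listed value $19$; do not build the argument on that description but on the matrix model directly. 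Your closing suggestion of an explicit six-to-one bijection would indeed be more illuminating, but the eigenvalue argument above already suffices.
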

  
  Next, we consider two different types of benzenoids constructed from two different linear chains of hexagons. 
  \begin{theorem} Let $G$ be a linear chain of $n$ hexagons, let $H$ be a linear chain of $m$ hexagons, let $v_1$ and $v_2$ be adjacent vertices of degree $2$ in $G$, and $x_1$ and $x_2$ be adjacent vertices of degree $2$ in $H$.  If  $K$ is the graph constructed from $G, H$, and $K_2=yz$ by adding the edges $v_2y, zx_2, v_1x_1$ (as shown in Figure~\ref{fig:Lhex}) so that the resulting graph is a benzenoid, then 
  \[i(K) = 3a_ma_n-3b_mb_n-a_mb_n-b_ma_n\]
  where $a_n=i(H), a_m=i(G)$, $b_m$ is the number of vertex covers in $H$ containing $v_1$ and not $v_2$, and $b_n$ is the number of vertex covers in $G$ containing $x_1$ and not $x_2$. 
  \end{theorem}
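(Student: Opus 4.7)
The plan is to count vertex covers of $K$ instead of independent sets, using the standard bijection $S \mapsto V(K)\setminus S$ (so $i(K)$ equals the number of vertex covers of $K$). For any vertex cover $C$ of $K$, the restriction $C\cap V(G)$ is a vertex cover of $G$, the restriction $C\cap V(H)$ is a vertex cover of $H$, and the only remaining constraints come from the edge $yz$ together with the three added edges $v_2y$, $zx_2$, $v_1x_1$. I will condition on $C\cap\{y,z\}$ and then apply inclusion--exclusion to the $v_1x_1$ edge in each subcase.

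As a preliminary bookkeeping step, I note that the linear chain of hexagons is symmetric under the reflection that exchanges $v_1$ and $v_2$ (and similarly for $x_1,x_2$), so the number of vertex covers of $G$ containing $v_2$ but not $v_1$ also equals $b_m$. Consequently the number of vertex covers of $G$ containing $v_1$ equals $a_m-b_m$, and the number avoiding $v_1$ (which forces $v_2\in C_G$ to cover the edge $v_1v_2$) equals $b_m$; analogous statements hold in $H$ with $b_n$.

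Because the edge $yz$ must be covered, there are exactly three cases. \textbf{Case 1} ($y,z\in C$): the edges $v_2y$, $zx_2$ are automatically covered, and inclusion--exclusion on the edge $v_1x_1$ gives
\[ a_m a_n \;-\; (\text{\# of }C_G\text{ avoiding }v_1)\,(\text{\# of }C_H\text{ avoiding }x_1) \;=\; a_m a_n - b_m b_n. \]
\textbf{Case 2} ($y\in C$, $z\notin C$): the edge $zx_2$ forces $x_2\in C_H$, and inclusion--exclusion on $v_1x_1$ conditional on $x_2\in C_H$ gives $a_m(a_n-b_n)-b_m b_n$. \textbf{Case 3} ($z\in C$, $y\notin C$): symmetrically, $v_2\in C_G$ is forced and the count is $(a_m-b_m)a_n - b_m b_n$. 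Summing the three cases yields
\[ 3 a_m a_n \;-\; 3 b_m b_n \;-\; a_m b_n \;-\; b_m a_n, \]
as claimed.

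The main obstacle is entirely bookkeeping: correctly tracking which conditional counts of vertex covers of $G$ and $H$ enter each of the three cases and performing the inclusion--exclusion on the edge $v_1x_1$ without double-counting. The one nontrivial ingredient is the reflective symmetry of a linear chain of hexagons, which lets the single parameter $b_m$ (respectively $b_n$) describe both the ``$v_1$ in, $v_2$ out'' and the ``$v_2$ in, $v_1$ out'' vertex covers; this symmetry can be read off directly from the structure depicted in Figure~\ref{fig:Lhex}.
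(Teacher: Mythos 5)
Your proof is correct, and it takes a genuinely different route from the paper's. The paper also passes to vertex covers, but it conditions on the intersection of the cover with $\{v_1,v_2,x_1,x_2\}$, enumerating all eight feasible patterns (five with exactly two of these vertices, four with three or four, one pattern being impossible), counting the free choices among $\{y,z\}$ in each, and then expanding terms like $5b_mb_n+5b_n(a_m-2b_m)+5b_m(a_n-2b_n)+3(a_m-2b_m)(a_n-2b_n)$ to reach the stated formula. You instead condition on $C\cap\{y,z\}$, which gives only three cases, and handle the one remaining constraint (the edge $v_1x_1$) by inclusion--exclusion; this is shorter and less error-prone, at the cost of making explicit the reflective symmetry of the linear chain that identifies the count of covers containing $v_2$ but not $v_1$ with $b_m$. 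That symmetry is not gratuitous: the paper uses it too, though only implicitly (e.g.\ when it counts covers of $H$ containing $x_2$ and not $x_1$ as $b_n$, and when it writes the number of covers containing both end-vertices as $a_m-2b_m$), so your making it an explicit preliminary observation is an improvement rather than an extra hypothesis. One cosmetic caution: the theorem statement as printed scrambles which of $a_m,b_m$ versus $a_n,b_n$ attaches to $G$ versus $H$; your convention (pairing $a_m,b_m$ with $G$ and $a_n,b_n$ with $H$) is internally consistent and, since the final expression is symmetric under exchanging the two chains together with their parameters, it yields the same formula.
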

  
  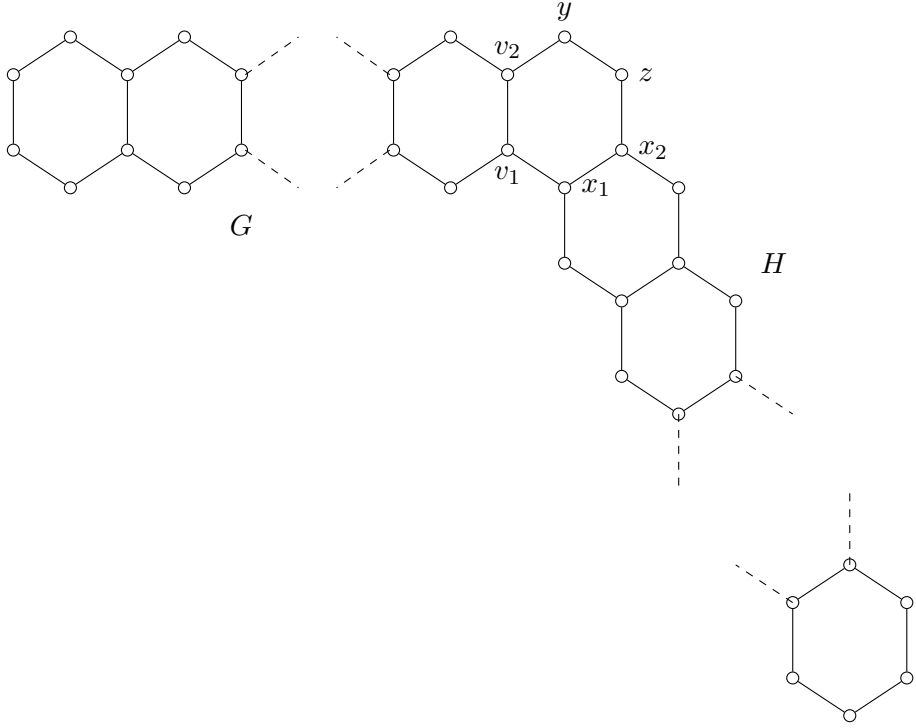
\begin{figure}[h!]
\begin{center}
\begin{tikzpicture}[auto]
    % Place nodes

	\vertex (0) at (0,0) [label=left:$$,scale=.75pt]{};
	\vertex (1) at (0,1) [label=left:$$,scale=.75pt]{};
	\vertex (2) at (.75, 1.5) [label=above:$$,scale=.75pt]{};
	\vertex (3) at (1.5, 1) [label=above:$$,scale=.75pt]{};
	\vertex (4) at (1.5, 0) [label=below:$$,scale=.75pt]{};
	\vertex (5) at (.75, -.5) [label=below:$$,scale=.75pt]{};
	\vertex (6) at (2.25, 1.5) [label=above:$$,scale=.75pt]{};
	\vertex (7) at (3, 1) [label=above:$$,scale=.75pt]{};
	\vertex (8) at (3, 0) [label=above:$$,scale=.75pt]{};
	\vertex (9) at (2.25, -.5) [label=above:$$,scale=.75pt]{};
	\vertex (10) at (5, 0) [label=above:$$,scale=.75pt]{};
	\vertex (11) at (5, 1) [label=above:$$,scale=.75pt]{};
	\vertex (12) at (5.75, 1.5) [label=above:$$,scale=.75pt]{};
	\vertex (13) at (6.5, 1) [label=above:$v_2$,scale=.75pt]{};
	\vertex (14) at (6.5, 0) [label=below:$v_1$,scale=.75pt]{};
	\vertex (15) at (5.75, -.5) [label=above:$$,scale=.75pt]{};
	\draw[dashed] (3.05,1) -- (3.75, 1.5);
	\draw[dashed] (3.05, 0) -- (3.75, -.5);
	\draw[dashed] (4.95, 1) -- (4.25, 1.5);
	\draw[dashed] (4.95, 0) -- (4.25, -.5);
	\draw[dashed] (9.5, -3)--(10.25, -3.5);
	\draw[dashed] (8.75, -3.5)--(8.75, -4.5);
	\draw[dashed] (11, -5.5) -- (11, -4.5);
	\draw[dashed] (10.25, -6) -- (9.5, -5.5);
	\node(G) at (3, -1)[]{$G$};
	\node(H) at (10, -1.5)[]{$H$};
	\vertex (16) at (7.25, 1.5) [label=above:$y$,scale=.75pt]{};
	\vertex (17) at (8,1) [label=right:$z$,scale=.75pt]{};
	\vertex (18) at (8,0) [label=right:$x_2$,scale=.75pt]{};
	\vertex (19) at (7.25, -.5) [label=right:$x_1$,scale=.75pt]{};
	\vertex (20) at (8.75, -.5) [label=above:$$,scale=.75pt]{};
	\vertex (21) at (8.75, -1.5) [label=above:$$,scale=.75pt]{};
	\vertex (22) at (8, -2) [label=above:$$,scale=.75pt]{};
	\vertex (23) at (7.25, -1.5) [label=above:$$,scale=.75pt]{};
	\vertex (24) at (9.5, -2) [label=above:$$,scale=.75pt]{};
	\vertex (25) at (9.5, -3) [label=above:$$,scale=.75pt]{};
	\vertex (26) at (8.75, -3.5) [label=above:$$,scale=.75pt]{};
	\vertex (27) at (8, -3) [label=above:$$,scale=.75pt]{};
	\vertex (28) at (11, -5.5) [label=above:$$,scale=.75pt]{};
	\vertex (29) at (11.75, -6) [label=above:$$,scale=.75pt]{};
	\vertex (30) at (11.75, -7) [label=above:$$,scale=.75pt]{};
	\vertex (31) at (11, -7.5) [label=above:$$,scale=.75pt]{};
	\vertex (32) at (10.25, -7) [label=above:$$,scale=.75pt]{};
	\vertex (33) at (10.25, -6) [label=above:$$,scale=.75pt]{};
	
	\path
		(0) edge (1) 
		(1) edge (2)
		(2) edge (3)
		(3) edge (4)
		(4) edge (5)
		(0) edge (5)
		(3) edge (6)
		(6) edge (7)
		(7) edge (8)
		(8) edge (9)
		(9) edge (4)
		(10) edge (11)
		(11) edge (12)
		(12) edge (13)
		(13) edge (14)
		(14) edge (15)
		(15) edge (10)
		(13) edge (16)
		(16) edge (17)
		(17) edge (18)
		(18) edge (19)
		(19) edge (14)
		(18) edge (20)
		(20) edge (21)
		(21) edge (22)
		(22) edge (23)
		(23) edge (19)
		(21) edge (24)
		(24) edge (25)
		(25) edge (26)
		(26) edge (27)
		(27) edge (22)
		(28) edge (29)
		(29) edge (30)
		(30) edge (31)
		(31) edge (32)
		(32) edge (33)
		(33) edge (28)

		;

\end{tikzpicture}
\end{center}
\caption{The graph $K$ constructed from two linear chains of hexagons }
\label{fig:Lhex}
\end{figure}
  
  \begin{proof} As in the proof of Theorem~\ref{thm:towerhex}, we count the number of vertex covers in $K$. Since we know that at least one of $\{v_1, v_2\}$ and at least one of $\{x_1, x_2\}$ must be contained in any vertex cover of $K$, we have eight cases to consider. Let $I$ be a vertex cover of $K$. Suppose first that $v_1$ and $x_1$ are in $I$, but neither $v_2$ nor $x_2$ are in $I$. In this case, $y$ and $z$ must both be in $I$ to cover the edges $\{yv_2, yz, zx_2\}$. Since $I\cap V(G)$ is a vertex cover of $G$ containing $v_1$ and not $v_2$, and $I \cap V(H)$ is a vertex cover of $H$ containing $x_1$ and not $x_2$, it follows that there are $b_mb_n$ possibilities for $I$. Next, let us consider the case where $v_1$ and $x_2$ are in $I$, but neither $v_2$ not $x_1$ are in $I$. It follows that $y \in I$ and $z$ may or may not be in $I$. Thus, there are $2b_nb_m$ possibilities for $I$. Analogously, if $v_2$ and $x_1$ are in $I$, and neither $v_1$ nor $x_2$ are in $I$, we have $2b_nb_m$ possibilities for $I$. Moreover, it is impossible for $I \cap \{v_1, v_2, x_1, x_2\} = \{v_2, x_2\}$. Hence, if $|I \cap \{v_1, v_2, x_1, x_2\}| = 2$, then there are $5b_nb_m$ possibilities. 
  
  Let us now consider the five cases where $|I \cap \{v_1, v_2, x_1, x_2\}|\in \{3, 4\}$. If $I \cap\{v_1, v_2, x_1, x_2\}= \{v_1, v_2, x_1\}$, then $z\in I$ and $I$ may or may not contain $y$. Since $I\cap V(H)$ is a vertex cover of $H$ containing both $v_1$ and $v_2$ (of which there are $a_m-2b_m$ possible vertex covers) and $I \cap V(G)$ is a vertex cover of $G$ containing $x_1$ and not $x_2$ (of which there are $b_n$ possible vertex covers), there are a total of $2b_n(a_m-2b_m)$ possibilities for $I$. Similarly, if $I \cap\{v_1, v_2, x_1, x_2\}=\{v_1, x_1, x_2\}$, then $y \in I$ and there are $2b_m(a_n-2b_n)$ possibilities for $I$. If $I \cap\{v_1, v_2, x_1, x_2\}=\{v_1, v_2, x_2\}$, then there are $3b_n(a_m-2b_m)$ possibilities, depending on whether $y$ and $z$ are in $I$. Similarly, if $I \cap\{v_1, v_2, x_1, x_2\}=\{v_2, x_1, x_2\}$, there are $3b_m(a_n-2b_n)$ possibilities for $I$. Finally, if $I \cap\{v_1, v_2, x_1, x_2\}= \{v_1, v_2, x_1, x_2\}$, then there are $3(a_m-2b_m)(a_n-2b_n)$ possibilities. Putting this all together, there are a total of 
  \[3a_ma_n-3b_mb_n-a_mb_n-b_ma_n\]
  different vertex covers of $K$.
  
  \end{proof}
  
  \begin{theorem}\label{thm:Rhex} Let $K$ be the benzenoid obtained from the two linear chains of hexagons, $G$ containing $m+1$ hexagons for $m\ge 1$ and $H$ containing $n+2$ hexagons for $n\ge 1$ both labeled as in Figure~\ref{fig:Rhex}, by identifying three pairs of vertices: $x_5$ and $v_{10}$, $x_4$ and $v_5$, and $x_3$ and $v_6$. Let $G' = G - \{x_3, x_4, x_5, x_6\}$, $H' = H - \{v_3, v_4, v_5, v_6, v_7, v_8, v_9, v_{10}\}$, $a_m=i(G')$, $a_n= i(H')$, $b_m$ represent the number of vertex covers of $G'$ that contain exactly one of $\{x_1, x_2\}$ and $b_n$ represent the number of vertex covers of $H'$ that contain exactly one of $\{v_1, v_2\}$. Then 
  \[i(K) = 88a_ma_n + 75b_mb_n - 70a_mb_n - 70b_ma_n.\]
  \end{theorem}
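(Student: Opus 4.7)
The plan is to count vertex covers of $K$, since $i(K)$ equals the number of vertex covers of $K$ via the standard complement bijection. I would mirror the casework strategy of Theorem~\ref{thm:towerhex} and the preceding theorem by decomposing $V(K)$ into three pieces: $V(G')$, $V(H')$, and a nine-vertex bridging region $B$ consisting of $\{x_3, x_4, x_5, x_6\}$ from $G$ together with $\{v_3, \dots, v_{10}\}$ from $H$, modulo the three identifications $x_5=v_{10}$, $x_4=v_5$, $x_3=v_6$. A vertex cover of $K$ is then determined by its intersection with each of these three pieces, subject to covering all cross-edges between $B$ and the boundary vertices $\{x_1, x_2\} \subseteq V(G')$ and $\{v_1, v_2\} \subseteq V(H')$.

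Next, I would case on the intersection of a vertex cover $I$ with the four boundary vertices $\{x_1, x_2, v_1, v_2\}$. Since $x_1x_2$ and $v_1v_2$ are edges, each pair contributes at least one vertex to $I$, yielding nine boundary patterns in total. For each pattern, the restriction of $I$ to $V(G')$ is a vertex cover of $G'$ with prescribed intersection with $\{x_1, x_2\}$: by the definition of $b_m$ there are $b_m$ such covers whose intersection with $\{x_1,x_2\}$ is a specified singleton (after splitting $b_m$ into its two symmetric halves), and $a_m - b_m$ such covers when $\{x_1,x_2\} \subseteq I$. Analogous counts hold for $H'$. Crucially, once the boundary is fixed, the choices on $G'$ and $H'$ decouple and multiply.

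For each of the nine boundary patterns I would then count the number of ways to extend $I$ to a vertex cover of $B$. The requirement that every edge incident to $B$ be covered, combined with which of $x_1, x_2, v_1, v_2$ are already in $I$, reduces each subproblem to counting vertex covers of an explicit small subgraph inside $B$ (short paths and cycles of length at most $6$), and each answer is a small positive integer that can be written down by inspection, much as $i(P_3)=5$ and $i(P_4)=8$ appeared in the proof of Theorem~\ref{thm:towerhex}. Collecting the nine contributions into a sum of products of a bridge factor with a $G'$ factor and an $H'$ factor, substituting $a_m - b_m$ for the ``both in $I$'' cases, and then collecting like terms should yield the claimed identity
\[
i(K) \;=\; 88\, a_m a_n + 75\, b_m b_n - 70\, a_m b_n - 70\, b_m a_n.
\]

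The main obstacle I anticipate is the sheer volume of bookkeeping: nine boundary cases, each with a distinct local subgraph inside $B$ whose vertex covers must be enumerated under a specific set of forcing and forbidding constraints, and then reassembled via the inclusion-exclusion that converts ``both of $\{x_1,x_2\}$ are in $I$'' counts into expressions in $a_m$ and $b_m$ (and likewise for $H'$). To guard against arithmetic slips, I would verify the final formula in a small base case such as $m=n=1$, where $G'$ and $H'$ each reduce to a single hexagon and all four quantities $a_m, a_n, b_m, b_n$ can be computed directly from Theorem~\ref{thm:towerhex} and a short enumeration, comparing the resulting value of $88 a_m a_n + 75 b_m b_n - 70 a_m b_n - 70 b_m a_n$ against a direct calculation of $i(K)$ on the corresponding benzenoid.
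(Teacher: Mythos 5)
Your proposal follows essentially the same route as the paper: count vertex covers, split into the nine cases determined by the intersection with $\{x_1,x_2,v_1,v_2\}$, enumerate the extensions inside the bridging hexagons for each case, and recombine multiplicatively with the $G'$ and $H'$ counts. The one caveat is your reading of $b_m$ as the total ``exactly one of $\{x_1,x_2\}$'' count split into two halves, with $a_m-b_m$ covers containing both: the paper's computation (and the stated formula, whose coefficients $75$, $-70$, $-70$ only come out this way) actually treats $b_m$ as the one-sided count and uses $a_m-2b_m$ for the ``both included'' case, so you would need to adopt that convention --- which your proposed $m=n=1$ sanity check would in any case force you to discover.
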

  
    \begin{figure}[h!]
\begin{center}
\begin{tikzpicture}[auto]
    % Place nodes

	\vertex (0) at (0,0) [label=left:$$,scale=.75pt]{};
	\vertex (1) at (1, 0) [label=left:$$,scale=.75pt]{};
	\vertex (2) at (-.5, .75) [label=above:$$,scale=.75pt]{};
	\vertex (3) at (0, 1.5) [label=above:$$,scale=.75pt]{};
	\vertex (4) at (1, 1.5) [label=below:$x_1$,scale=.75pt]{};
	\vertex (5) at (1.5, .75) [label=below:$x_2$,scale=.75pt]{};
	\vertex (6) at (1.5, 2.25) [label=above:$x_6$,scale=.75pt]{};
	\vertex (7) at (2.5, 2.25) [label=above:$x_5$,scale=.75pt]{};
	\vertex (8) at (3, 1.5) [label=right:$x_4$,scale=.75pt]{};
	\vertex (9) at (2.5, .75) [label=below:$x_3$,scale=.75pt]{};
	\draw[dashed] (-.5, .75) -- (-1.5, .75);
	\draw[dashed] (0, 0) -- (-.5, -.75);
	\node(G) at (-.5, 2.5)[]{$G$};
	\node(H) at (8.5, 0)[]{$H$};
	\draw[dashed] (7, -2.25) -- (7.5, -3);
	\draw[dashed] (6, -2.25) -- (5.5, -3);
	\vertex (10) at (6, 2.25) [label=above:$v_9$,scale=.75pt]{};
	\vertex (11) at (7, 2.25) [label=above:$v_8$,scale=.75pt]{};
	\vertex (12) at (7.5, 1.5) [label=right:$v_7$,scale=.75pt]{};
	\vertex (13) at (7, .75) [label=right:$v_4$,scale=.75pt]{};
	\vertex (14) at (6, .75) [label=left:$v_5$,scale=.75pt]{};
	\vertex (15) at (5.5, 1.5) [label=left:$v_{10}$,scale=.75pt]{};
	\vertex (16) at (7.5, 0) [label=right:$v_3$,scale=.75pt]{};
	\vertex (17) at (7, -.75) [label=right:$v_2$,scale=.75pt]{};
	\vertex (18) at (7.5, -1.5) [label=below:$$,scale=.75pt]{};
	\vertex (19) at (7, -2.25) [label=below:$$,scale=.75pt]{};
	\vertex (20) at (6, -2.25) [label=below:$$,scale=.75pt]{};
	\vertex (21) at (5.5, -1.5) [label=below:$$,scale=.75pt]{};
	\vertex (22) at (6, -.75) [label=left:$v_1$,scale=.75pt]{};
	\vertex (23) at (5.5, 0) [label=left:$v_6$,scale=.75pt]{};

	\path 
	
	 (0) edge (1)
	 (0) edge (2)
	 (2) edge (3)
	 (3) edge (4)
	 (4) edge (5)
	 (5) edge (1)
	 (4) edge (6)
	 (6) edge (7)
	 (7) edge (8)
	 (8) edge (9)
	 (9) edge (5)
	 (10) edge (11)
	 (11) edge (12)
	 (12) edge (13)
	 (13) edge (14)
	 (14) edge (15)
	 (15) edge (10)
	 (13) edge (16)
	 (16) edge (17)
	 (17) edge (18)
	 (18) edge (19)
	 (19) edge (20)
	 (20) edge (21)
	 (21) edge (22)
	 (22) edge (23)
	 (23) edge (14)
	 (17) edge (22)

	;
 \end{tikzpicture}
\end{center}
\caption{The graph $K$ constructed from two linear chains of hexagons in Theorem~\ref{thm:Rhex}}
\label{fig:Rhex}
\end{figure}
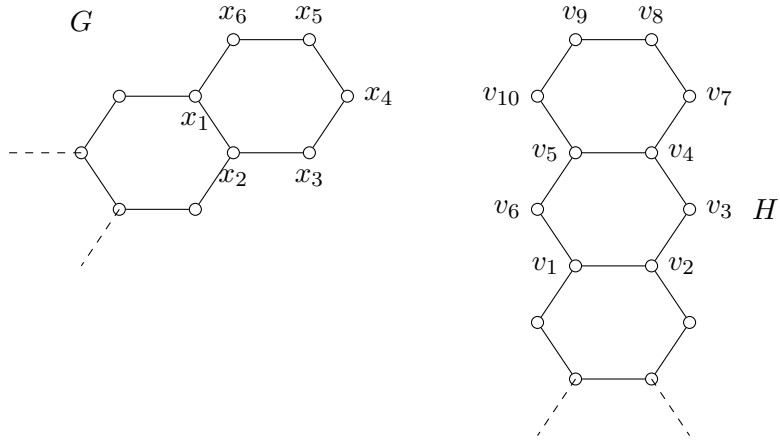 
  \begin{proof} Label the identified vertices in $K$ as shown in Figure~\ref{fig:Rhex}. In order to count the number of vertex covers, let us consider only the three hexagons whose vertices are labeled. Since we know that either $v_1$ or $v_2$ must be in a vertex cover, and likewise $x_1$ or $x_2$ must be in a vertex cover, we have nine cases to consider. Let us first consider the cases in which two of these four vertices are included. First, let us consider the case where $v_1$ and $x_1$ are included in the vertex cover, and $v_2$ and $x_2$ are not included. In this case, we know that $k$ and $v_3$ must be included in the vertex cover. Then the number of ways to pick the rest of the vertices is the number of vertex covers on a single hexagon that contain $d$, which is $13$, times $2$ to account for $x_6$ plus the number that do not contain $d$, which is $5$. This then gives us $31$ different configurations in this case. The cases where $v_1$ and $x_2$ are included (and $v_2$ and $x_1$ not included) and where $v_2$ and $x_2$ are included (and $v_1$ and $x_1$ not included) analogously give us $31$ different configurations. Let us now consider the case where $v_2$ and $x_1$ are included in the vertex cover and $v_1$ and $x_2$ are not included. In this case we know that $k$ must be included in our vertex cover. Notice that of the $18$ different vertex covers of the hexagon induced by $\{j, d, v_4, v_7, v_8, v_9\}$, three of them contain $d$ and do not contain $v_4$, three of them contain $v_4$ and do not contain $d$, ten of them contain both $d$ and $v_4$, and two of them contain neither. With this we can then see that the total number of configurations in this case is $(3 + 3) 2 + 10 (4) + 2 = 54$. Therefore, the total number of vertex covers containing exactly one vertex from $\{v_1, v_2\}$ and exactly one vertex from $\{x_1, x_2\}$ is $(31+31+54+31)b_nb_m$.
  
  Now consider the number of vertex covers containing exactly three vertices from $\{v_1, v_2, x_1, x_2\}$. Notice that the cases where $v_1, v_2$, and $x_1$ are included and where $v_2, x_1$, and $x_2$ are included will also give us $54$ configurations by the same reasoning. Next, let us consider the case where $v_1, v_2$, and $x_2$ are included in the vertex cover. In this case we know that $x_6$ must be included in our vertex cover. Notice that of the $18$ different vertex covers of the top hexagon, five of them contain $j$ and not $v_4$, five of them contain $v_4$ and not $j$, and eight of them contain both $v_4$ and $j$. With this we can see that the total number of configurations in this case is $(5 + 5) 2 + 8 (4) = 52$. Now notice that the case where $v_1, x_1$, and $x_2$ are included will also give us $52$ configurations by the same reasoning. This gives us a total of \[(54+52)b_m(a_n-2b_n)+(52+54)b_n(a_m-2b_m)\] different vertex covers containing exactly three vertices from $\{v_1, v_2, x_1, x_2\}$.
  
  Finally, let us consider the case where $v_1, v_2, x_1$, and $x_2 $ are all included in the vertex cover. Notice that of the $18$ different vertex covers of the top hexagon, two of them contain $j$ and not d and $v_4$, three of them contain $d$ and $j$ and not $v_4$, five of them contain $d$ and $v_4$ and not $j$, three of them contain $j$ and $v_4$ and not $d$, and five of them contain $d, v_4$, and $j$. With this we can see that the total number of configurations in this case is $2(2)+(3+5+3)22+5(23)=88(a_n-2n_n)(a_m-2b_m)$. Having now checked each case, we can conclude that \[i(K) = 88a_ma_n+75b_mb_n-70a_mb_n-70b_ma_n.\]
 \end{proof}

 Finally, we generalize Theorem~\ref{thm:towerhex} to account for a linear chain of  $s$-gons where $s\ge 4$ is an even integer. In this case, we define a linear chain of $s$-gons where $s\ge 4$ recursively as follows. Let $G$ and $H$ be two regular $s$-gons where a pair of parallel edges are drawn vertically. Identify one vertical edge from $G$ with one vertical edge from $H$. Recursively identify vertical edges to create a linear chain of said $s$-gons. 
  
\begin{theorem} Let $G$ be linear chain of $n\ge 1$ $s$-gons where $s\ge 4$ is an even integer. Further, label the vertices on a vertical edge (the vertical edge if $n\ge 2$) of the  left most $s$-gon whose vertices have degree $2$ in $G$ as $w$ and $x$ as in Figure~\ref{fig:towerhex}. The number of vertex covers in $G$ is given by $a_n$ where $a_1= F_{s+1} + F_{s-1}$, $b_1 = F_{s-1}$, \[a_n = F_sa_{n-1} - 2F_{s-2}b_n\hskip15mm\text{ for $n\ge 2$}\] and 
\[b_n = \left(F_{\frac{s-2}{2}}\right)\left(F_{\frac{s}{2}}\right)a_{n-1} + \left(\left(F_{\frac{s-4}{2}}\right)\left(F_{\frac{s}{2}}\right) - F_{\frac{s-2}{2}}\left(F_{\frac{s-4}{2}}+F_{\frac{s}{2}}\right)\right)b_{n-1}\]
and $b_n$ represents the number of vertex covers that contain exactly one vertex in  $\{w, x\}$.
\end{theorem}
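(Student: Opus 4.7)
The proof will follow the strategy of Theorem~\ref{thm:towerhex}, now parameterized by the even integer $s$. I would label the leftmost $s$-gon so that $w, x$ are the degree-$2$ vertices of its left vertical edge and $u, z$ are the vertices of its right vertical edge (shared with the second $s$-gon when $n \ge 2$). The top arm from $x$ to $z$ and the bottom arm from $w$ to $u$ are each paths on $s/2$ vertices. I will exploit the bijection between vertex covers and independent sets together with $i(P_k) = F_{k+2}$ from Theorem~\ref{thm:P_nC_n} to establish that the number of vertex covers of $P_k$ with both endpoints in is $F_k$, with exactly one endpoint in is $F_{k-1}$, and with both endpoints out is $F_{k-2}$; these follow by removing the forced vertices and counting independent sets of the resulting subpath.

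For the base case $n = 1$, Theorem~\ref{thm:P_nC_n} gives $a_1 = i(C_s) = L_s = F_{s+1} + F_{s-1}$. To compute $b_1$, I would count vertex covers of $C_s$ with $w$ in and $x$ out by conditioning on the status of $(u,z)$ (necessarily not both out, since $uz$ is an edge). In each of the three valid cases the top and bottom arms are independent paths on $s/2$ vertices with known endpoint statuses, contributing respectively $F_{(s-2)/2}F_{s/2}$, $F_{(s-4)/2}F_{s/2}$, and $F_{(s-2)/2}^2$. Their sum simplifies via $F_{s/2} = F_{(s-2)/2} + F_{(s-4)/2}$ and the standard identity $F_m^2 + F_{m-1}^2 = F_{2m-1}$ (with $m = s/2$) to $F_{s-1}$, as claimed.

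For the inductive step ($n \ge 2$), I would peel off the leftmost $s$-gon and condition on the status of $(u,z)$. By the definition of $b_{n-1}$ together with the top--bottom symmetry of the chain, the subchain of $n-1$ $s$-gons has $a_{n-1} - 2b_{n-1}$ vertex covers with both of $u, z$ in, and $b_{n-1}$ each with only $u$ in or only $z$ in. For $a_n$, summing over all $(w,x)$ statuses consistent with covering edge $wx$, the path formulas above show the leftmost $s$-gon admits $F_s$ extensions when $(u,z)$ are both in and $F_{s-1}$ extensions when exactly one is in, giving $a_n = F_s(a_{n-1} - 2b_{n-1}) + 2F_{s-1}b_{n-1} = F_s a_{n-1} - 2F_{s-2}b_{n-1}$ after using $F_s - F_{s-1} = F_{s-2}$. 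The same decomposition restricted to $w$ in and $x$ out produces $b_n = F_{(s-2)/2}F_{s/2}(a_{n-1} - 2b_{n-1}) + \bigl(F_{(s-4)/2}F_{s/2} + F_{(s-2)/2}^2\bigr)b_{n-1}$.

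The main obstacle will be rewriting the coefficient of $b_{n-1}$ above in the stated form $F_{(s-4)/2}F_{s/2} - F_{(s-2)/2}\bigl(F_{(s-4)/2} + F_{s/2}\bigr)$; the difference between the two expressions is $F_{(s-2)/2}\bigl(F_{(s-2)/2} + F_{(s-4)/2} - F_{s/2}\bigr)$, which vanishes by the Fibonacci recurrence. Aside from this bookkeeping, the argument is a direct case analysis on the four corner vertices of the leftmost $s$-gon, exactly paralleling the hexagonal case.
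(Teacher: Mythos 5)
Your proof is correct and follows essentially the same route as the paper's: induction on the number of $s$-gons, conditioning on which of the two shared vertical-edge vertices lie in the cover, counting the extensions over the remaining arcs via $i(P_k)=F_{k+2}$, and arriving at the same three-term expression $F_{(s-2)/2}F_{s/2}(a_{n-1}-2b_{n-1})+\bigl(F_{(s-4)/2}F_{s/2}+F_{(s-2)/2}^2\bigr)b_{n-1}$ before simplifying with the Fibonacci recurrence (note your recursion $a_n=F_sa_{n-1}-2F_{s-2}b_{n-1}$ is what the paper actually derives; the index in the theorem statement is a typo). The only differences are cosmetic: you compute $b_1$ via the arm decomposition and the identity $F_m^2+F_{m-1}^2=F_{2m-1}$ where the paper simply forces both neighbors of $x$ into the cover and reads off $i(P_{s-3})=F_{s-1}$, and your uniform endpoint-status counts for path covers let you treat all even $s\ge 4$ at once, whereas the paper handles $s=4$ separately and defers $s=6$ to its hexagon theorem before assuming $s\ge 8$.
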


\begin{proof} Note that by Theorem~\ref{thm:P_nC_n}, $i(C_n) = F_{n-1}+F_{n+1}$ which implies  $a_1 = F_{s+1} + F_{s-1}$. To see that $b_1= F_{s-1}$, let $wxz$ be a path in $G$. Any vertex cover that contains $w$ and does not contain $x$ must contain $z$, leaving $i(P_{s-3}) = F_{s-1}$ possibilities for the remaining vertices in the vertex cover. Therefore, assume the statement of the theorem is true for a linear chain of $n\ge 1$ $s$-gons and consider when $G$ is a linear chain of $n+1$ $s$-gons. Label the vertices of the $(n+1)^{st}$ $s$-gon as $v_1 \dots v_s$ where $v_{1}$ and $v_{2}$ have degree $3$ in $G$. To see why $a_{n+1}=F_sa_n - 2F_{s-2}b_n$, we consider three cases, two of which are equivalent. First, let us find the number of vertex covers that contain $v_{2}$ and do not contain $v_{1}$. In this case, we know $v_{s}$ must be in the vertex cover. The number of vertex covers of this type is the number of vertex covers of the path $v_3\dots v_{s-1}$, which is $i(P_{s-3}) = F_{s-1}$, times the number of vertex covers of $G - \{v_3, \dots, v_s\}$ that contain $v_2$ and not $v_1$, which is $b_n$. Thus, the total number of vertex covers containing $v_2$ and not $v_1$ is $F_{s-1}b_n$. Similarly, the number of vertex covers that contain $v_1$ and do not contain $v_2$ is $F_{s-1}b_n$.  Now let us consider the case where both $v_1$ and $v_2$ are contained in the vertex cover. The number of vertex covers of this type is the number of vertex covers of the path $v_3\dots v_s$, which is $i(P_{s-2}) = F_s$, times the number of vertex covers of $G - \{v_3, \dots, v_s\}$ that contain both $v_1$ and $v_2$, which is $a_n - 2b_n$. Combining this altogether, we get 
\[a_{n+1} = 2F_{s-1}b_n + F_s(a_n-2b_n) = F_sa_n -2F_{s-2}b_n.\]

Let us now show that \[b_{n+1}=\left(F_{\frac{s-2}{2}}\right)\left(F_{\frac{s}{2}}\right)a_n+\left(\left(F_{\frac{s-4}{2}}\right)\left(F_{\frac{s}{2}}\right) -F_{\frac{s-2}{2}}\left(F_{\frac{s-4}{2}}+F_{\frac{s}{2}}\right)\right)b_n,\]
which is the number of vertex covers of $G$ that contain $v_{\frac{s}{2}+1}$ and not $v_{\frac{s}{2}+2}$. Suppose first that $s=4$. Therefore, $b_{n+1}$ counts the number of vertex covers that contain $v_3$ and not $v_4$. In this case, $v_1$ must be in the vertex cover. Since there are $b_n$ vertex covers of $G - \{v_3, v_4\}$ that contain $v_1$ and not $v_2$ and $a_n - 2b_n$ vertex covers of $G - \{v_3, v_4\}$ that contain $v_1$ and $v_2$, we have 
\[b_{n+1}= a_n - b_n = (F_1)(F_2)a_n + \left((F_0)(F_2) - F_1(F_0 + F_2)\right)b_n.\]

So we may assume that $s\ge 6$. Moreover, by Theorem~\ref{thm:towerhex}, we may assume $s\ge 8$. Let $S$ be such a vertex cover that contains 
 $v_{\frac{s}{2}+1}$ and not $v_{\frac{s}{2}+2}$. We have three cases to consider. Notice that in each case $v_{\frac{s}{2}+3} \in S$.  First, let us assume $v_2\not\in S$ and $v_1 \in S$. It follows that $v_3 \in S$. Therefore, we have the number of vertex covers of the paths $v_4\cdots v_{\frac{s}{2}}$, of which there are $i(P_{\frac{s-6}{2}}) = F_{\frac{s-2}{2}}$, times the number of vertex covers of the path $v_{\frac{s}{2}+4}\cdots v_s$, of which there are $F_{\frac{s-2}{2}}$, times the number of vertex covers of $G-\{v_3, \dots, v_s\}$ that contain $v_1$ and not $v_2$, of which there are $b_n$. 
 Next, assume $v_1\not\in S$ and $v_2 \in S$. It follows that $v_s \in S$. Thus, we have the number of vertex covers of the paths $v_3\cdots v_{\frac{s}{2}}$, of which there are $i(P_{\frac{s-4}{2}}) = F_{\frac{s}{2}}$, times the number of vertex covers of the path $v_{\frac{s}{2}+4}\cdots v_{s-1}$, of which there are $i(P_{\frac{s-8}{2}}) = F_{\frac{s-4}{2}}$, times the number of vertex covers of $G- \{v_3, \dots, v_s\}$ that contain $v_2$ and not $v_1$, of which there are $b_n$. Finally, suppose $v_1\in S$ and $v_2 \in S$. One can easily verify there are $\left(F_{\frac{s-2}{2}}\right)\left(F_{\frac{s}{2}}\right)(a_n - 2b_n)$ possibilities for $S$ in this case. Hence, 
 \begin{eqnarray*}
 b_{n+1} &=& \left(F_{\frac{s-2}{2}}\right)\left(F_{\frac{s-2}{2}}\right)b_n + \left(F_{\frac{s-4}{2}}\right)\left(F_{\frac{s}{2}}\right)b_n + \left(F_{\frac{s-2}{2}}\right)\left(F_{\frac{s}{2}}\right)(a_n - 2b_n)\\
 &=& \left(F_{\frac{s-2}{2}}\right)\left(F_{\frac{s}{2}}\right)a_{n} + \left(\left(F_{\frac{s-4}{2}}\right)\left(F_{\frac{s}{2}}\right) - F_{\frac{s-2}{2}}\left(F_{\frac{s-4}{2}}+F_{\frac{s}{2}}\right)\right)b_{n}.
 \end{eqnarray*}

\end{proof}

\end{document}